\documentclass[11pt]{amsart}
\usepackage[margin=1.2in,marginparsep=0.1in,marginparwidth=1in]{geometry}
\usepackage{amssymb,amsmath,amsthm,amstext,amscd,latexsym,graphics,graphicx,bbm,caption}
\usepackage[usenames,dvipsnames,svgnames,table]{xcolor}
\usepackage[plainpages=false,colorlinks=true, pagebackref]{hyperref}
\usepackage{tikz-cd}
\usetikzlibrary{positioning}
\usepackage{multicol}
\setlength{\columnsep}{1cm}
\usepackage{lipsum}

\tikzset{>=stealth}
\hypersetup{citecolor=Sepia,linkcolor=blue, urlcolor=blue}

\usepackage{cite}   

\makeatletter
\def\@tocline#1#2#3#4#5#6#7{\relax
  \ifnum #1>\c@tocdepth 
  \else
    \par \addpenalty\@secpenalty\addvspace{#2}%
    \begingroup \hyphenpenalty\@M
    \@ifempty{#4}{%
      \@tempdima\csname r@tocindent\number#1\endcsname\relax
    }{%
      \@tempdima#4\relax
    }%
    \parindent\z@ \leftskip#3\relax \advance\leftskip\@tempdima\relax
    \rightskip\@pnumwidth plus4em \parfillskip-\@pnumwidth
    #5\leavevmode\hskip-\@tempdima
      \ifcase #1
       \or\or \hskip 2em \or \hskip 2em \else \hskip 3em \fi%
      #6\nobreak\relax
    \dotfill\hbox to\@pnumwidth{\@tocpagenum{#7}}\par
    \nobreak
    \endgroup
  \fi}
\makeatother

\newtheorem{intro-thm}{Theorem}[]
\theoremstyle{plain}
\newtheorem{thm}{Theorem}[section]
\newtheorem{theorem}[thm]{Theorem}
\newtheorem{question}[thm]{Question}
\newtheorem{lemma}[thm]{Lemma}
\newtheorem{corollary}[thm]{Corollary}
\newtheorem{proposition}[thm]{Proposition}
\newtheorem{conjecture}[thm]{Conjecture}
\theoremstyle{definition}
\newtheorem{remark}[thm]{Remark}

\newtheorem{definition}[thm]{Definition}
\newtheorem{examples}[thm]{Examples}



\newcommand{\ilim}{\mathop{\varprojlim}\limits} 

\newcommand{\Union}{\bigcup}

\newcommand{\union}{\cup}



\renewcommand{\tilde}{\widetilde}
\newcommand{\sat}{{\rm sat}}

\newcommand{\sR}{{\mathcal R}}


\newcommand{\N}{{\mathbb N}}

\newcommand{\R}{{\mathbb R}}

\newcommand{\Z}{{\mathbb Z}}

\newcommand{\ab}[1]{\langle {#1} \rangle}
\newcommand{\ac}[1]{\{#1\}}



\newcommand{\Rings}{{\mathsf{Rings}}}
\newcommand{\Com}{{\mathsf{ComRings}}}
\newcommand{\Ab}{{\mathsf{Ab}}}

\input{xy}
\xyoption{all}

\begin{document}

\title{A universal construction of $p$-typical Witt vectors of associative rings} 

\author[S. Pisolkar]{Supriya Pisolkar} \address{ Indian Institute of
Science, Education and Research (IISER),  Homi Bhabha Road, Pashan,
Pune - 411008, India} \email{supriya@iiserpune.ac.in} 

\author[B. Samanta] {Biswanath Samanta} \address{ Indian Institute of
Science, Education and Research (IISER),  Homi Bhabha Road, Pashan,
Pune - 411008, India} \email{biswanath.samanta@students.iiserpune.ac.in} 

\thanks{The second-named author was supported by Ph.D. fellowship (File No: 09/936(0315)/2021-
EMR-I) of Council of Scientific \& Industrial Research (CSIR), India.} 
\maketitle 
\begin{abstract}
For a prime $p$ and an associative ring $R$ with unity, there are various constructions of $p$-typical Witt vectors of $R$, all of which specialize to the classical $p$-typical Witt vectors when $R$ is commutative. These constructions are endowed with a Verschiebung operator $V$ and a Teichm\"uller map $\ab{\cdot}$, and they satisfy the property that the map $
x \mapsto V\ab{x^p} - p\ab{x}$ is additive. In this paper, we adapt the group-theoretic universal characterization of classical $p$-typical Witt vectors proposed in \cite{ps} to the non-commutative setting. Our main result is that this approach yields a construction of Witt vectors for associative rings, denoted $E$, which specializes correctly to the classical Witt functor in the commutative case. The construction of $E$ is inspired by the Witt functor of Cuntz--Deninger (see \cite{cd}), and we show that $E$ is a universal pre-Witt functor (see Theorem~\ref{main2}), subject to an explicit conjecture concerning non-commutative polynomials (see Conjecture \ref{zind}). We further introduce the notion of a Witt functor and construct a universal Witt functor $\widehat{E}$, which is closely related to Hesselholt’s Witt functor $W_H$ (see \cite{h1}, \cite{h2}). We suspect that $W_H$ is, in fact, the universal Morita-invariant Witt functor.
\end{abstract}
\section{Introduction}

\noindent We fix a prime $p$ throughout this article. We will often refer to $p$-typical Witt vectors simply as Witt vectors. Let $\Com$, $\Rings$ and $\Ab$ denote respectively the category of commutative rings with unity, associative rings with unity and abelian groups. We have a classical $p$-typical Witt functor (see \cite{w}) $$W: \Com \to \Ab$$ There exist functorial maps 
\begin{enumerate}\label{W-properties}
    \item Verschiebung: $V:W(R)\to W(R)$ (group homomorphism)
    \item Teichm\"{u}ller: $\ab{\ }: R\to W(R)$ (a set map)
\end{enumerate}
which satisfy the following properties (see \cite{ps})
\begin{enumerate}
    \item $\ab{0}=0$. If $p\neq 2$ then $\ab{-x} = -\ab{x}$ for all $x \in R$.
    \item $x \mapsto V\ab{x^p} -p\ab{x}$ is an additive map from $R \to W(R)$. 
    \item $W(R)$ is complete w.r.t to the filtration $\{V^nW(R)\ | \ n \in \N_0 \}$.
    \item $R$ is $p$-torsion free $\implies W(R)$ is $p$-torsion free.
\end{enumerate}
\noindent The above properties were the basis of the definition of a pre-Witt functor in \cite{ps}. We recall the following characterization of $W$ 
\begin{theorem}\cite[Theorem 1.6]{ps}\label{commain} For $p\neq 2$, the classical functor of $p$-typical Witt vectors $W: \Com \to Ab$ is a universal pre-Witt functor.
\end{theorem}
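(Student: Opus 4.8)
\noindent The plan is to verify the universal property of $W$ directly: for every pre-Witt functor $(F,V_F,\ab{\cdot}_F)$ one must produce a unique natural transformation $\eta\colon W\to F$ with $\eta\circ V=V_F\circ\eta$ and $\eta_R\circ\ab{\cdot}=\ab{\cdot}_F$. Recall the classical fact that every element of $W(R)$ has a \emph{unique} expansion $\sum_{n\ge 0}V^n\ab{a_n}$ ($a_n\in R$, convergent $V$-adically) and that $\bigcap_n V^nW(R)=0$. Since $F(R)$ is complete for $\{V_F^nF(R)\}$ (property (3)), the formula $\eta_R\bigl(\sum_n V^n\ab{a_n}\bigr):=\sum_n V_F^n\ab{a_n}_F$ defines a map $W(R)\to F(R)$; naturality in $R$, and compatibility with $V$ and with $\ab{\cdot}$, are immediate from the construction and from functoriality of $W$, $V$, $\ab{\cdot}$. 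Uniqueness is also immediate: any competing $\eta'$ agrees with $\eta$ on each $V^n\ab{a}$, hence on the dense subgroup they generate, and is continuous since $\eta'\circ V=V_F\circ\eta'$ forces $\eta'(V^nW(R))\subseteq V_F^nF(R)$. So the whole theorem reduces to showing that $\eta$ is a group homomorphism. (Equivalently one could build a candidate universal object $E$ as a free complete filtered abelian group on symbols $V^n\ab{a}$ modulo the relations imposed by (1)--(2) and identify it with $W$; this has the same hard core.)

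\noindent By naturality, additivity of $\eta$ reduces to the universal case $R=A:=\Z[x_0,x_1,\dots;y_0,y_1,\dots]$ with $u=\sum_n V^n\ab{x_n}$, $v=\sum_n V^n\ab{y_n}$, for which $u+v=\sum_n V^n\ab{c_n}$ with $c_n=c_n(x_*,y_*)$ the universal integral Witt-addition polynomials; the general case follows by applying a ring homomorphism out of $A$, using naturality and the fact that the induced map on $F$ is a homomorphism. Since $V_F$ is additive, after reorganizing convergent sums the required identity $\eta_A(u)+\eta_A(v)=\sum_n V_F^n\ab{c_n}_F$ is a consequence of the two-term building block
\[
(\star)\qquad \ab{a}_F+\ab{b}_F=\sum_{n\ge 0}V_F^n\ab{s_n(a,b)}_F\quad\text{in } F(\Z[a,b]),
\]
(where $s_0=a+b$, $s_1=\tfrac1p(a^p+b^p-(a+b)^p)$, and the $s_n\in\Z[a,b]$ are the classical polynomials) together with its analogues for sums of more than two Teichm\"uller elements, which one derives from $(\star)$ by iterating level by level. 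Crucially, $A$ and $\Z[a,b]$ are $p$-torsion free, so property (4) makes $F(A)$ and $F(\Z[a,b])$ $p$-torsion free, and this is the essential extra input.

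\noindent To prove $(\star)$ I would induct on $N$, showing $\ab{a}_F+\ab{b}_F\equiv\sum_{n<N}V_F^n\ab{s_n}_F\pmod{V_F^NF(\Z[a,b])}$, and then pass to the limit using completeness. Property (2), applied to $a$ and $b$, gives $p\,x_0=V_F(y_0)$ with $x_0=\ab{a}_F+\ab{b}_F-\ab{a+b}_F$ and $y_0=\ab{a^p}_F+\ab{b^p}_F-\ab{(a+b)^p}_F$; applied to the polynomial arguments occurring at stage $N$ it similarly yields a relation expressing $p$ times the stage-$N$ defect in terms of data already pinned down by the inductive hypothesis. The classical Witt-vector computation shows that, after reduction, that data is \emph{visibly} $p$ times an explicit element of $F(\Z[a,b])$ --- for instance $\ab{a^p}_W+\ab{b^p}_W-\ab{(a+b)^p}_W$ equals $p$ times a Witt vector because its divided ghost vector satisfies the Witt congruences, via $(u+pv)^{p^m}\equiv u^{p^m}\pmod{p^{m+1}}$ --- and cancelling this $p$ \emph{inside} $F(\Z[a,b])$, legitimate there by $p$-torsion freeness, identifies the stage-$N$ defect with $V_F^N\ab{s_N}_F$ modulo $V_F^{N+1}$. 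The hypothesis $p\ne 2$ enters through property (1): the $s_n$ have negative coefficients, and the reorganizations and cancellations repeatedly invoke $\ab{-x}_F=-\ab{x}_F$.

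\noindent The step I expect to be the main obstacle is exactly this cancellation of $p$ in the induction, i.e.\ the joint use of (2) and (4). Property (2) only ever controls $p\cdot(\text{a defect})$ and never the defect itself, so a naive induction on the $V_F$-filtration would want to cancel $p$ in an associated graded quotient $V_F^nF(R)/V_F^{n+1}F(R)$; but (4) gives $p$-torsion freeness only of $F(R)$, and one can write down functors satisfying (1)--(3) (but not (4)) for which precisely such graded cancellation fails. Hence the induction must be arranged so that every division by $p$ takes place in $F(R)$ itself: one needs, at each stage, that the combination $w$ on the right of $p\,\delta=V_F^{N+1}(w)$ is provably of the form $p\,w'$ with $w'\in F(R)$, after which $p(\delta-V_F^{N+1}w')=0$ forces $\delta=V_F^{N+1}w'$. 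Establishing these $p$-divisibilities uniformly and in step with the classical formulas --- essentially, showing that the Witt-addition polynomials force the relevant Teichm\"uller combinations to be $p$-divisible modulo deeper filtration --- is where the genuine combinatorial work lies; everything else (convergence of the reorganized sums, the passage from $(\star)$ to arbitrary finite sums, and the final assembly of $\eta$) is routine and is handled by completeness.
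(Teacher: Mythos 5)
Your overall architecture --- define $\eta_R\bigl(\sum_n V^n\ab{a_n}\bigr):=\sum_n V_F^n\ab{a_n}_F$ using uniqueness of Witt coordinates and completeness of $F$, get uniqueness of $\eta$ by density and $V$-continuity, and reduce additivity to the universal identity $(\star)$ in $F(\Z[a,b])$ --- is a genuinely different route from the one in \cite{ps} (which this paper only quotes, and whose strategy is mirrored in Section 3 here for the noncommutative case). There one builds a candidate universal object from generators $V^n_a$ and the relations forced by properties (1)--(2), takes the \emph{$p$-saturation} of the relation subgroup, completes, kills $p$-power torsion, and then identifies the result with the Cuntz--Deninger group $X(A)$ for a polynomial ring $A$ via the linear-independence statement \cite[Lemma 4.2]{ps}; the identity $(\star)$ is then obtained in any pre-Witt functor $F$ by pushing forward the classical identity, verified on ghost components inside $X(A)\subset A^{\N_0}$, along the induced map to $F$.

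The gap in your proposal sits exactly at the point you flag as the main obstacle, and the mechanism you propose does not close it. Property (2) gives $p\,\delta_0=V(\epsilon_0)$ with $\delta_0=\ab{a}_F+\ab{b}_F-\ab{a+b}_F$ and $\epsilon_0=\ab{a^p}_F+\ab{b^p}_F-\ab{(a+b)^p}_F$; to divide by $p$ you need $\epsilon_0\in pF(\Z[a,b])$. The ``classical Witt-vector computation'' you invoke shows $\epsilon_0=p\mu$ \emph{in $W$}, using the ghost map and the ring structure; transporting that divisibility to an abstract $F$ presupposes the very natural transformation $W\to F$ you are constructing, so the argument is circular. Working only from (1)--(2) one finds $\epsilon_0=p\ab{r_1}_F+\Delta$, where $\Delta$ is a signed sum of further defects $D(s,t)=\ab{s}_F+\ab{t}_F-\ab{s+t}_F$, each of which again satisfies only $pD=V(\epsilon')$ with $\epsilon'$ of the same shape; iterating yields congruences $p^N\bigl(\delta_0-\sum_{n\le N}V^n\ab{r_n}_F\bigr)\in V^{N+1}F$ but never an exact identity $p\,x=p\,y$ to which torsion-freeness of $F(\Z[a,b])$ applies, and $p^N\theta\in V^{N+1}F$ for all $N$ does not force $\theta=0$ (already for $F=W$ and $R=\F_p$, where $V$ is multiplication by $p$, it only yields $\theta\in pF$). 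The paper's route avoids local division by $p$ entirely: one shows that a single $p$-power multiple of the \emph{total} defect lies exactly in the (closure of the) relation subgroup generated by (1)--(2) --- this is the content of the $p$-saturation step together with \cite[Lemma 4.2]{ps}, replaced by Conjecture \ref{zind} in the noncommutative setting --- so that its image in $F$ is literally $p$-power torsion and vanishes by property (4). Unless you supply an independent proof, from (1)--(4) alone, that $\epsilon_0$ and its higher-level analogues are $p$-divisible in $F(\Z[a,b])$, your induction does not go through.
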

\noindent The key point of the above characterization is that it uses only the group structure on $W(R)$, and not the ring structure. This is important because many known constructions of Witt vectors for non-commutative rings (see \cite{b}, \cite{J}) produce only abelian groups and are not equipped with a ring structure. A natural question, which we address in this paper, is the following: what is the universal Witt functor on the category $\Rings$ obtained by mimicking these properties?
This motivates the following definition which generalizes the definition of a pre-Witt functor \cite[Definition 1.3]{ps} to the case of associative rings (possibly non-commutative). 

\begin{definition}[pre-Witt functor]\label{def:prewitt}
    A pre-Witt functor is a functor $F:\Rings \to \Ab$ which has functorial maps 
\begin{enumerate}
    \item Verschiebung operator: $V: F(R)\to F(R)$ (group homomorphism)
    \item Teichm\"{u}ller map : $\ab{\ }: R\to F(R)$ (a set map)
\end{enumerate}
which satisfy the following properties 
\begin{enumerate}\label{properties}
    \item $\ab{0}=0$. If $p\neq 2$ then $\ab{-x} = -\ab{x}$ for all $x \in R$.
    \item $x \mapsto V\ab{x^p} -p\ab{x}$ is an additive map from $R \to F(R)$. 
    \item $F(R)$ is complete w.r.t to the filtration $\{V^nF(R)\ | \ n \in \N_0 \}$.
    \item $R$ and $\bar{R}$ are $p$-torsion free $\implies F(R)$ is $p$-torsion free, where $\bar{R} := \frac{R}{\ab{[R,R]}}$ and  $\ab{[R,R]}$ is the ideal generated by the commutator subgroup $[R,R]$. 
\end{enumerate}
\end{definition}

\noindent The two main examples of pre-Witt functors considered in this paper are
\begin{enumerate}
\item The functor $E : \Rings \to \Ab$ which is obtained by adapting the construction of Cuntz and Deninger (see Definition \ref{def:e})
\item The Witt functor $W_H: \Rings \to \Ab$ defined by Hesselholt (see \cite{h1}, \cite{h2}). 
\end{enumerate}
\vspace{2mm}
The following is the main result of this paper
\begin{theorem} \label{main} 
$E$ is a pre-Witt functor. Moreover 
\begin{enumerate}
\item The restriction of $E$ to $\Com$ is canonically isomorphic to the classical Witt functor $W$.
\item There exists a unique natural transformation $E \to W_H$, which is surjective.
\end{enumerate}
\end{theorem}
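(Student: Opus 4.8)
The plan is to prove the three assertions of Theorem~\ref{main} in the order listed, since each one builds on infrastructure from the previous. First I would verify that $E$ is a pre-Witt functor. This means going through the four axioms of Definition~\ref{def:prewitt}. For $E$, the Verschiebung $V$ and Teichm\"uller map $\ab{\cdot}$ come directly from the Cuntz--Deninger-style construction (Definition~\ref{def:e}), so axiom~(1) ($\ab{0}=0$, and $\ab{-x}=-\ab{x}$ for odd $p$) should be immediate from the definition of $\ab{\cdot}$. Axiom~(2), additivity of $x\mapsto V\ab{x^p}-p\ab{x}$, is the heart of the matter: I expect this is built into the defining relations of $E$ (indeed the abstract says all these constructions ``satisfy the property that the map $x\mapsto V\ab{x^p}-p\ab{x}$ is additive''), so this should be a matter of unwinding the construction rather than a genuine computation. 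Axiom~(3), $V$-adic completeness, should follow because $E(R)$ is defined as (or is manifestly) a limit/completion of truncations $E_n(R)=E(R)/V^nE(R)$; one checks $E(R)\xrightarrow{\sim}\varprojlim_n E(R)/V^nE(R)$. Axiom~(4), the $p$-torsion-freeness statement, is the subtle one and is presumably where Conjecture~\ref{zind} (the ``explicit conjecture concerning non-commutative polynomials'') enters; I would isolate exactly what is needed and cite the conjecture if the unconditional argument is not available, but note that the statement of Theorem~\ref{main} as given does not flag it as conditional, so perhaps for $E$ itself the torsion-freeness of $E(R)$ when $R$ and $\bar R$ are $p$-torsion free can be shown directly by comparing $E(R)$ with $E(\bar R)=W(\bar R)$ via part~(1).

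For part~(1), that $E|_{\Com}\cong W$ canonically: the strategy is to exhibit the isomorphism using the universal property of Theorem~\ref{commain}. Since $W:\Com\to\Ab$ is a universal pre-Witt functor (for $p\neq 2$) and $E|_{\Com}$ is a pre-Witt functor on $\Com$ (which follows from the first part of the theorem, as the axioms restrict), there is a canonical natural transformation $W\to E|_{\Com}$ compatible with $V$ and $\ab{\cdot}$. To see it is an isomorphism, I would argue on each truncation: both $W$ and $E$ are $V$-adically complete with compatible filtrations, so it suffices to show $\mathrm{gr}^n_V W(R)\to \mathrm{gr}^n_V E(R)$ is an isomorphism for all $n$, or equivalently to check the map on the associated graded, which in degree $0$ is $\mathrm{id}_R$ and in higher degrees is governed by the Teichm\"uller elements. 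Alternatively, and perhaps more cleanly, one checks directly from the Cuntz--Deninger construction that when $R$ is commutative the defining relations of $E(R)$ collapse to the standard Witt vector relations (the ghost/Witt polynomial identities), giving $E(R)\cong W(R)$ by inspection; the universal property then guarantees this identification is the canonical one. The $p=2$ case, if needed, has to be handled separately since Theorem~\ref{commain} excludes it, but the direct-inspection argument does not care about the parity of $p$.

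For part~(2), the existence and uniqueness of a surjective natural transformation $E\to W_H$: since $W_H$ is asserted (in the list of examples) to be a pre-Witt functor, it comes equipped with its own $V_H$ and $\ab{\cdot}_H$. I would construct $E\to W_H$ by using whatever universal property $E$ enjoys among pre-Witt functors on $\Rings$ — this is precisely the content of ``$E$ is a universal pre-Witt functor'' from Theorem~\ref{main2}, though that is stated to be conditional on Conjecture~\ref{zind}. To keep part~(2) unconditional I would instead build the map by hand: $W_H(R)$, like $E(R)$, is $V$-adically complete and generated (topologically) by Verschiebungs of Teichm\"uller elements $V^n\ab{a}$, subject to relations; one defines $E(R)\to W_H(R)$ on these generators by $V^n\ab{a}\mapsto V_H^n\ab{a}_H$ and checks that the relations defining $E$ are satisfied in $W_H$ — this is where the specific presentation of $E$ (inherited from Cuntz--Deninger) must be compared with Hesselholt's relations. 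Surjectivity follows because the $V_H^n\ab{a}_H$ topologically generate $W_H(R)$ and the map respects the complete filtrations, so a surjection on each graded piece (in degree $0$ it is $\mathrm{id}_R$) gives a surjection after completion. Uniqueness holds because any such natural transformation must send $V^n\ab{a}$ to $V_H^n\ab{a}_H$ (compatibility with $V$ and $\ab{\cdot}$) and these topologically generate $E(R)$, so continuity (automatic from filtration-compatibility, which any morphism of complete filtered groups has) pins the map down. The main obstacle, as in the rest of the paper, is verifying that Hesselholt's defining relations for $W_H$ are implied by those for $E$ — i.e. that the naive assignment on generators is well-defined — and I would expect to do this by carefully matching the two presentations, possibly reducing (as the paper does elsewhere) to an identity about non-commutative polynomials.
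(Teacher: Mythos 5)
There is a genuine gap in your treatment of pre-Witt axiom (4), and it stems from not locating where the paper actually encodes $p$-torsion-freeness. The paper's proof is unconditional and never touches Conjecture~\ref{zind} here: the subgroup $\tilde{X_I}$ in Definition~\ref{def:e} is built to contain the saturation data $\Sigma_I$, and taking $S=R$, $\phi=\mathrm{id}$ in the definition of $\Sigma_I$ shows that $\tilde{X_I}=X_I^{\sat}$ whenever $R$ and $\bar{R}$ are $p$-torsion free (Remark~\ref{rmk:e}(3)); hence $E(R)=X(\Z\{R\})/X_I^{\sat}$ is $p$-torsion free essentially by construction. Your first fallback (cite the conjecture) would make Theorem~\ref{main} conditional, which it is not. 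Your second fallback --- deducing torsion-freeness of $E(R)$ by ``comparing $E(R)$ with $E(\bar R)=W(\bar R)$'' --- does not work: the map $E(R)\to E(\bar R)$ has no reason to be injective, so $p$-torsion-freeness of the target gives nothing about the source. Also, axioms (1)--(3) are inherited from $X(\Z\{R\})$ (completeness because $\tilde{X_I}$ is a closed $V$-invariant subgroup of a $V$-complete group), not by defining $E$ as a limit of truncations $E/V^nE$.

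For part (1) the paper does not use the universal property of $W$ (Theorem~\ref{commain} requires $p\neq 2$, whereas Theorem~\ref{main} carries no such restriction); instead it compares $E(R)$ with the commutative Cuntz--Deninger group $E_C(R)=X(\Z[R])/X_K$, already known to be $W(R)$, by constructing explicit mutually inverse maps $\tilde f$ and $\tilde g$. The step your ``collapse by inspection'' glosses over is precisely the hard one: showing that the saturation part $\Sigma_I$ of $\tilde{X_I}$ lands in $X_K$, which requires factoring $\phi\colon S\to R$ through $\bar S$ and then through $\Z[\bar S]$, and invoking \cite[Cor. 2.10]{cd} that $X_M^{\sat}=X_M$ for commutative $p$-torsion-free rings; your associated-graded argument also presupposes an identification of $\mathrm{gr}_V E(R)$ that is not available a priori. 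Part (2) of your proposal does match the paper's route (send $V^n\ab{a}\mapsto V^n\ab{a}$ on the free resolution, check well-definedness, use topological generation for surjectivity and uniqueness), but the ingredient you should make explicit is that killing $\Sigma_I$ in $W_H(R)$ uses exactly the fact that $W_H$ is a pre-Witt functor, so $W_H(S)$ is $p$-torsion free when $S$ and $\bar S$ are.
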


\begin{remark}
Here, we note that property~\eqref{properties}(4) has been slightly modified compared to property~(4) satisfied by $W$ in the commutative case. The main observation of this paper is that this modification requiring both $R$ and $\bar{R}$ to be $p$-torsion-free is necessary in order to show that $E$ restricts to the classical Witt functor $W$ on $\Com$. Moreover, since for a commutative ring $R$ the ring $\bar{R}$ is trivial (and hence vacuously torsion-free), this property specializes correctly to the commutative case. In addition, Hesselholt’s Witt functor $W_H$ satisfies these properties and is therefore a pre-Witt functor.
\end{remark}

\noindent We also show that modulo an explicit conjecture (see Conjecture \ref{zind}) related to non-commutative polynomials, $E$ is in fact a universal pre-Witt functor. 
\begin{theorem}\label{main2}
Let $p \neq 2$. Assuming Conjecture \ref{zind}, $E$ is a universal pre-Witt functor.  
\end{theorem}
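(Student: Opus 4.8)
The plan is to establish universality in two parts: first, exhibit the natural transformations out of $E$ to any pre-Witt functor $F$ on the canonical ``polynomial'' generators; second, check that these are forced and hence extend uniquely and compatibly. Concretely, let $F : \Rings \to \Ab$ be any pre-Witt functor with Verschiebung $V_F$ and Teichm\"uller $\ab{\cdot}_F$. I would first record, using properties (1)--(3) of Definition~\ref{def:prewitt}, that for every ring $R$ the group $F(R)$ receives a canonical map from the ``ghost-type'' expressions $V^n\ab{x^{p^k}}$, and that the additive map $x \mapsto V\ab{x^p} - p\ab{x}$ together with the $V$-adic completeness determines all of $F(R)$ once we know its values on elements of the form $\sum_{n\ge 0} V^n\ab{a_n}$. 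Since $E$ is built (following Cuntz--Deninger) as a completed group generated by symbols $V^n\ab{a}$ modulo exactly the relations forced by (1)--(3), the universal property will follow once we show these are \emph{all} the relations in $E$ — i.e. that $E$ is the ``free'' pre-Witt functor on the Teichm\"uller and Verschiebung data. This is where Conjecture~\ref{zind} enters: it guarantees $\Z$-independence (no hidden relations) among the relevant non-commutative polynomial expressions, so that the presentation of $E$ has no collapse beyond what (1)--(4) impose.

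The key steps, in order, are: (i) Write down the natural candidate $\eta_R : E(R) \to F(R)$ by sending each generating symbol $V^n\ab{a}$ of $E(R)$ to $V_F^n\ab{a}_F$, and extend by additivity and $V$-adic continuity; this is well-defined on generators by construction. (ii) Verify $\eta_R$ respects the defining relations of $E(R)$: the relations coming from $\ab{0}=0$, $\ab{-x}=-\ab{x}$ (using $p\neq 2$), and — crucially — the additivity relation $(V\ab{x^p}-p\ab{x}) + (V\ab{y^p}-p\ab{y}) = V\ab{(x+y)^p} - p\ab{x+y}$ expanded out, all of which hold in $F(R)$ by hypothesis. This reduces to matching, relation-by-relation, the presentation of $E$ against the axioms; here one uses that the relations defining $E$ were \emph{chosen} to be the universal consequences of (1)--(4), invoking Theorem~\ref{main} so that $E$ genuinely is a pre-Witt functor and its relations are consistent. (iii) Establish uniqueness of $\eta$: any natural transformation $E \to F$ commuting with $V$ and $\ab{\cdot}$ must agree with $\eta_R$ on the symbols $V^n\ab{a}$, which topologically generate $E(R)$, so continuity (property (3) for both $E$ and $F$) pins it down. (iv) Finally, use Conjecture~\ref{zind} to ensure that imposing \emph{exactly} axioms (1)--(4) on the free data does not over-collapse $E$ — that is, $E$ is minimal with respect to the pre-Witt axioms, so no pre-Witt functor can factor through a proper quotient; equivalently, the torsion-freeness clause (4), applied to suitable polynomial test rings $\Z[x_i]$ and their quotients $\bar{R}$, together with $\Z$-independence of the non-commutative polynomials in Conjecture~\ref{zind}, shows the relation module of $E$ is saturated.

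The main obstacle I expect is step (iv), and specifically the interface with Conjecture~\ref{zind}. In the commutative case (\cite[Theorem 1.6]{ps}), $p$-torsion-freeness of $W(\Z[x,y])$ plus the classical algebraic independence of the ghost components does the work of showing there are no extra relations; in the non-commutative setting the analogue requires controlling the kernel of the map from a free pre-Witt object to $E$ applied to free associative rings, and this kernel is governed precisely by whether certain explicitly written non-commutative polynomials (the ``correction terms'' in expanding $(x+y)^{p^k}$ modulo $p$-th powers and commutators) are $\Z$-linearly independent modulo the commutator ideal — which is exactly the content of Conjecture~\ref{zind}. Assuming that conjecture, the argument should close: property (4) of Definition~\ref{def:prewitt} for $E(R)$ when $R$ and $\bar R$ are $p$-torsion-free, combined with the fact that over $\Q$ the Verschiebung-and-Teichm\"uller data freely generate (no $p$ in the denominators obstructing), forces the comparison map $\eta_R$ to be the unique one and $E$ to sit universally below every pre-Witt functor. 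A secondary, more routine obstacle is bookkeeping the $p\neq 2$ hypothesis through the sign relation $\ab{-x}=-\ab{x}$, which is needed so that the free pre-Witt construction is well-behaved on additive inverses; this is handled exactly as in \cite{ps}.
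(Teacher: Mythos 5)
Your overall strategy---send the generators $V^n\ab{a}$ of $E(R)$ to $V^n\ab{a}$ in $F(R)$ and use Conjecture \ref{zind} to rule out hidden relations---is in the right spirit, but the key step is asserted rather than proved, and asserted on a false premise. In steps (i)--(ii) you claim the map is ``well-defined on generators by construction'' because ``the relations defining $E$ were chosen to be the universal consequences of (1)--(4)''. That is not how $E$ is defined: $E(R) = X(\Z\{R\})/\tilde{X_I}$, where $X(\Z\{R\})$ is a concrete subgroup of the product $\Z\{R\}^{\N_0}$ (Cuntz--Deninger style), so the elements $V^n\ab{a}$ may a priori satisfy relations coming from the ambient product that are not formal consequences of the pre-Witt axioms. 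Whether they do is precisely the content of the theorem, and it is precisely where Conjecture \ref{zind} must enter---not as an after-the-fact check as in your step (iv), but to make $\eta_R$ well defined at all. The paper's route is to first build the genuinely free object: a functor $C$ given by the free abelian group on symbols $V^n_r$ modulo the relations forced by axioms (1)--(2), then $p$-saturated, $V$-completed, with $p$-power torsion killed, and finally quotiented by $\tilde{G_I}$ (which includes the $\Sigma_I$-type elements). One shows (Theorem \ref{universalweakprewitt}) that $C$ admits a natural transformation to every pre-Witt functor $F$---and note that killing the $\Sigma_I$/saturation relations in $F(R)$ requires invoking property (4) of $F$ for auxiliary rings $S$ with $S$ and $\bar{S}$ $p$-torsion free, a step your outline never addresses---and then uses Conjecture \ref{zind} to prove $C(A)\cong X(A)$ for a free associative ring $A$, whence $C\cong E$ after checking $\eta_A(\tilde{G_I})=\tilde{X_I}$ for the presentation $\Z\{R\}\to R$. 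Your proposal contains no construction of this intermediate universal object, so the existence of $E\to F$ is never actually established.

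Two further inaccuracies. Conjecture \ref{zind} asserts $\Z$-linear independence of the Teichm\"uller elements $\ab{f_i}$ inside $X(A)$ for the non-commutative free ring $A=\Z\{S\}$ itself; your description of it as independence ``modulo the commutator ideal'' would reduce it to the already-known commutative statement \cite[Lemma 4.2]{ps} and is not what is needed. Likewise the relevant test objects are free associative rings $\Z\{S\}$, not the commutative polynomial rings $\Z[x_i]$ you invoke for property (4). Your uniqueness argument in (iii), via topological generation by the $V^n\ab{a}$ and $V$-completeness, is fine, but existence is the substance of the proof and your outline does not supply it.
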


\noindent One of the main motivations for this paper and also \cite{ps} was the following question 

\begin{question} \label{question:w_h} Can $W_H$ be characterized by universal properties? 
\end{question}

\noindent For a commutative ring $R$, the group of $p$-typical Witt vectors $W(R)$ has an additional property, i.e. that sum and difference of elements in $W(R)$ is given by explicit Witt polynomials. This implies the existence of a canonical bijection $R^{\N_0} \to W(R)$ of sets. A similar statement also holds for $W_H(R)$ for a non-commutative ring $R$, except that instead of a bijection, we have an epimorphism $R^{\N_0}\to W_H(R)$ (see \cite[Page 56]{h2}). Motivated by this, we call a pre-Witt functor $F$ a Witt functor if the sum and difference of elements in $F(R)$ are given by non-commutative Witt polynomials (see Definition \ref{rnen}). \\

\vspace{1mm}
\noindent It is natural to ask if there is a universal Witt functor on $\Rings$. To answer this, we define a Witt functor $\hat{E}$ that is a suitable quotient of the pre-Witt functor $E$ by a closed subgroup defined by the certain Witt polynomials (see Definition \ref{defn:ehat}). We have the following conjectural characterization of the functor $\hat{E}$.

\begin{theorem}\label{ehatuniversal}
The Conjecture \ref{zind} implies that  $\hat{E}$ is a universal Witt functor for $p\neq 2$. In particular there is a unique natural transformation from $\hat{E}\to W_H$ 
\end{theorem}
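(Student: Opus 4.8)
The plan is to deduce Theorem~\ref{ehatuniversal} from Theorem~\ref{main2} by a quotient argument. Recall that $\hat E$ is defined as the quotient of the pre-Witt functor $E$ by the closed subgroup generated by the relations forcing the sum and difference to be given by the non-commutative Witt polynomials. First I would verify that $\hat E$ is itself a pre-Witt functor: the Verschiebung $V$ and Teichm\"uller map $\ab{\cdot}$ descend to $\hat E$ because the subgroup we quotient by is $V$-stable and contains no Teichm\"uller elements in a way that would collapse them, property~\eqref{properties}(1) and (2) pass to quotients trivially, completeness is preserved since the quotient of a complete filtered group by a closed subgroup is complete, and the $p$-torsion-freeness in property~\eqref{properties}(4) must be checked against the Witt-polynomial description — here one uses that when both $R$ and $\bar R$ are $p$-torsion free the ghost-component map for the non-commutative Witt polynomials is injective up to $p$-torsion, exactly as in the classical computation. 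By construction $\hat E$ is then a Witt functor.

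Next, for universality, let $F$ be any Witt functor. Since every Witt functor is in particular a pre-Witt functor, Theorem~\ref{main2} (using $p \neq 2$ and Conjecture~\ref{zind}) gives a unique natural transformation $\Phi : E \to F$ compatible with $V$ and $\ab{\cdot}$. The key step is to show that $\Phi$ factors through $\hat E$, i.e.\ that $\Phi$ kills the closed subgroup $K(R) \subseteq E(R)$ we quotient by. This is where the definition of a Witt functor is used: the generators of $K(R)$ are precisely the elements expressing $\ab{x} + \ab{y} - (\text{Witt polynomial combination})$ and similarly for differences; since $F$ is a Witt functor, these same combinations vanish in $F(R)$, and since $\Phi$ is additive and commutes with $V$ and $\ab{\cdot}$, it sends each generator of $K(R)$ to $0$. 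Continuity of $\Phi$ (all our natural transformations are filtration-preserving, hence continuous for the $V$-adic topologies) then forces $\Phi(\overline{K(R)}) = 0$, so $\Phi$ descends to $\bar\Phi : \hat E \to F$. Uniqueness of $\bar\Phi$ follows from uniqueness of $\Phi$ together with surjectivity of the quotient map $E \surj \hat E$.

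The last point, the existence of a unique natural transformation $\hat E \to W_H$, is then immediate: $W_H$ is a Witt functor (its sum and difference are given by Hesselholt's non-commutative Witt polynomials, as recalled in the introduction), so applying the universal property of $\hat E$ with $F = W_H$ yields the map; alternatively one factors the map $E \to W_H$ from Theorem~\ref{main}(2) through $\hat E$, which is consistent by uniqueness.

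The main obstacle I expect is the factorization step, specifically pinning down that the subgroup $K(R)$ is generated — as a \emph{closed} subgroup — exactly by the Witt-polynomial relations, and that these relations are \emph{natural} in $R$, so that a natural transformation out of $E$ that kills them on generators kills all of $\overline{K(R)}$ naturally. This requires care about the interaction between the $V$-adic topology and the (possibly infinitely many) defining relations, and it is the place where one must check that the non-commutative Witt polynomials used to define $\hat E$ genuinely match those governing an arbitrary Witt functor $F$; any discrepancy in normalization there would break the factorization. A secondary technical point is confirming property~\eqref{properties}(4) for $\hat E$, which again hinges on the same ghost-map injectivity input and ultimately on Conjecture~\ref{zind} in the non-commutative setting.
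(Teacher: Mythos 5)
Your proposal is correct and follows essentially the same route as the paper: the paper's proof simply invokes Theorem~\ref{main2} to get that $E$ is a universal pre-Witt functor and then observes that, since $\hat{E}$ is the quotient of $E$ by exactly the closed $V$-stable subgroup of Witt-polynomial relations and any Witt functor $F$ satisfies those relations, the unique map $E \to F$ factors through $\hat{E}$. Your write-up supplies the factorization and uniqueness details that the paper leaves as "one can easily deduce," so there is no substantive difference in approach.
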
 

\noindent One can show that $\hat{E}$ is not isomorphic to $W_H$ and moreover it is not Morita invariant. We would like to pose the following question related to \ref{question:w_h}.
\begin{question}
Is  $W_H$ is a universal Morita-invariant Witt Functor?
\end{question}
\vspace{2mm}

\noindent Theorems \ref{main2} and \ref{ehatuniversal} in this paper rely on the following conjecture concerning non-commutative polynomials. This conjecture is a direct analogue of \cite[Lemma 4.2]{ps}, which played a crucial role in the proof of the main result \cite[Theorem 1.6]{ps}. The statement below uses the notation introduced in Section \ref{section:e}.

\begin{conjecture}\label{zind}
Let $p\neq 2$. Let $A = \Z\{S\}$ be a non-commutative polynomial ring over a set $S$. Let $\{f_i\}_{i=1}^r$ be a finite set of distinct non-zero elements of $A$. Further assume  that $f_i\neq -f_j$ for any $i\neq j$. Then the subset $\{\ab{f_i}\}_{i=1}^r$ of $X(A)$ is $\Z$-linearly independent. 
\end{conjecture}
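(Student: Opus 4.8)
The plan is to transpose the argument of \cite[Lemma~4.2]{ps} to the non-commutative setting. That proof rested on two things: the ghost map is injective on the $p$-typical Witt vectors of a $p$-torsion-free ring, and distinct polynomials (with $f_i\neq -f_j$) can be specialized to integers so that an archimedean ``peel off the dominant term'' argument applies. In the non-commutative case a commutative specialization loses too much (it cannot even distinguish $\langle s_1s_2\rangle$ from $\langle s_2s_1\rangle$), so the archimedean step must be replaced by an intrinsic statement about powers of elements of a free associative algebra. Concretely, the genuinely new input will be: \emph{high powers of pairwise non-proportional elements of $\Q\{S\}$ are $\Q$-linearly independent.}

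\emph{First step: descend to $\Z\{S\}$ via ghost maps.} I would use that the construction of $X$ in Section~\ref{section:e} is compatible with ghost components, i.e.\ that there are homomorphisms of abelian groups $w_n\colon X(A)\to(A,+)$ with $w_n\langle a\rangle=a^{p^n}$ and $w_n\circ V=p\,w_{n-1}$ (with $w_{-1}=0$). These are well defined on the defining relations of $X(A)$: the additivity of $a\mapsto V\langle a^p\rangle-p\langle a\rangle$ makes the additivity relation pass, and the standing assumption $p\neq 2$ makes $w_n\langle -a\rangle=(-a)^{p^n}=-a^{p^n}$ consistent with $\langle -a\rangle=-\langle a\rangle$; alternatively one may compose $X\to E\to W_H$ (Theorem~\ref{main}) with the ghost maps of Hesselholt's $W_H$ (see \cite{h2}). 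Applying the $w_n$ to a hypothetical relation $\sum_{i=1}^r n_i\langle f_i\rangle=0$ in $X(A)$ yields $\sum_i n_i f_i^{p^n}=0$ in $A=\Z\{S\}$ for every $n\geq 0$. Thus it is enough to prove: if $f_1,\dots,f_r\in\Z\{S\}$ are distinct, nonzero, with $f_i\neq -f_j$ for $i\neq j$, and $\sum_i n_i f_i^{p^n}=0$ for all $n\geq 0$, then all $n_i=0$.

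\emph{Second step: proportionality classes and an archimedean estimate.} Over $\Q$, group the $f_i$ into $\Q$-proportionality classes with pairwise non-proportional primitive representatives $g_1,\dots,g_s$, so $f_i=c_i\,g_{\kappa(i)}$ with $c_i\in\Q^\times$; within a class the $c_i$ are distinct and, because $f_i\neq -f_j$, also $c_i\neq -c_j$, so the $|c_i|$ in a class are pairwise distinct. The relation becomes $\sum_{t=1}^s\big(\sum_{\kappa(i)=t} n_i c_i^{p^n}\big)g_t^{p^n}=0$ for all $n$. Granting the key statement below, the $g_t^{p^n}$ are $\Q$-linearly independent for all $n\gg 0$, hence $\sum_{\kappa(i)=t} n_i c_i^{p^n}=0$ for each $t$ and all large $n$; dividing by $c_{i^{*}}^{p^n}$ for the index $i^{*}$ of largest $|c_i|$ in the class and letting $n\to\infty$ forces $n_{i^{*}}=0$, and iterating downward in $|c_i|$ kills every $n_i$.

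\emph{The key statement and the main obstacle.} It remains to prove that nonzero, pairwise non-proportional $g_1,\dots,g_s\in\Q\{S\}$ have $g_1^{p^n},\dots,g_s^{p^n}$ $\Q$-linearly independent for all $n\gg 0$. When the $g_t$ are homogeneous of a common degree $d$ this is clean: multiplication identifies $\Q\{S\}_{dp^n}\cong(\Q\{S\}_d)^{\otimes p^n}$ and carries $g_t^{p^n}$ to $g_t^{\otimes p^n}\in\mathrm{Sym}^{p^n}(V')$ with $V'=\mathrm{span}_\Q(g_1,\dots,g_s)$, and by duality $\{g_t^{\otimes p^n}\}_t$ is independent iff the $s$ distinct points $[g_1],\dots,[g_s]\in\P(V')$ impose independent conditions on forms of degree $p^n$, which holds once $p^n\geq s-1$ (for each $t$ take the product of $s-1$ hyperplanes through the other points but not through $[g_t]$, padded by a power of a linear form). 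The general case would be reduced to this one by a degree-filtration argument, passing to top-degree parts so that the $g_t$ of non-maximal degree drop out. The hard part is precisely that two non-proportional elements of a free algebra can have proportional — even equal — leading forms to arbitrarily high order (for instance $s_1+s_1^2$ and $2s_1+s_1^2$), so no fixed finite number of top strata separates them; one must organise a secondary induction that descends through lower-degree strata (using the uniqueness of roots in free associative algebras), and proving that this descent terminates with linear independence is the delicate point. Together with the (presumably routine) verification that the construction of $X$ in Section~\ref{section:e} genuinely admits the ghost maps used in the first step, this is, I believe, the content that keeps the statement a conjecture rather than a corollary of \cite[Lemma~4.2]{ps}.
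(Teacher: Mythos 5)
This statement is Conjecture \ref{zind}: the paper does not prove it, and Section \ref{evidence} offers only computational evidence, so there is no proof of record to compare yours against; your proposal must stand on its own, and, as you yourself acknowledge, it is incomplete. What you do establish is worth recording. Your first step is correct but can be stated more simply: $X(A)$ is by construction a subgroup of $A^{\N_0}$ (Definition \ref{def:ncxa}), so your maps $w_n$ are literally the coordinate projections; there are no ``defining relations'' to check and no need to route through $W_H$. The conjecture is thus equivalent to the concrete assertion that the sequences $(f_i,f_i^p,f_i^{p^2},\dots)$ are $\Z$-linearly independent in $A^{\N_0}$, i.e.\ that $\sum_i n_i f_i^{p^n}=0$ for all $n\geq 0$ forces every $n_i=0$. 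The passage to $\Q$-proportionality classes and the archimedean elimination of scalars within a class are also correct (and this is precisely where $p\neq 2$ and $f_i\neq -f_j$ enter, via the distinctness of the $|c_i|$). The homogeneous-common-degree case of your key statement, via the Veronese embedding and points imposing independent conditions on forms of degree $p^n\geq s-1$, is fine.

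The genuine gap is exactly where you place it: the key statement for pairwise non-proportional but non-homogeneous $g_1,\dots,g_s$. The proposed ``secondary induction that descends through lower-degree strata'' is not an argument: you state no inductive hypothesis, no quantity that decreases, and no reason the descent terminates, and your own example shows that leading forms to any fixed depth need not separate the $g_t$, so the induction cannot run on anything extracted from a bounded part of the grading. For $s=2$ the key statement does follow from Bergman's centralizer theorem (if $g^N=c\,h^N$ then $g$ and $h$ both centralize $g^N$, hence lie in a common subalgebra $\Q[w]$, reducing to the one-variable commutative case), but this gives no purchase on linear dependencies among three or more powers. Since the non-homogeneous case with $s\geq 3$ carries essentially all of the content of Conjecture \ref{zind}, your proposal is a useful reformulation together with partial progress, not a proof.
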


The outline of the paper is as follows. In the Section 2 we give the definition of the functor $E: \Rings \to \Ab$ and prove the first part of Theorem \ref{main} i.e we prove that $E$ is a pre-Witt functor and part(1) of Theorem \ref{main}. In Section 3, we prove Theorem \ref{main2}. In Section 4 we define the notion of a Witt functor and give the definition of the functor $\hat{E}$ and prove Theorem \ref{main}(2) witand Theorem \ref{ehatuniversal}. In Section \ref{evidence} we discuss evidence for the Conjecture \ref{zind}. 


\section{Definition and properties of $E: \Rings \to \Ab$} \label{section:e}

In this section we define the functor $E$ (see Definition \ref{def:e}) used in Theorem \ref{main}. We adapt the construction of $p$-typical Witt vectors by Cuntz and Deningers (see \cite{cd}) to enforce properties \ref{properties}.  We begin by recalling the definitions and results about the functor $X : \Rings \to \Ab$ as in \cite{cd}. Let $\N_0 := \N\union \{0\}$.

\begin{definition}[$X(R)$]\label{def:ncxa} Let $R \in \Rings$. We have
\begin{enumerate}
\item (Verschiebung) $V: R^{\N_0}\to R^{\N_0}$ given by 
$ V(r_0,r_1,...) := p(0, r_0,r_1,...)$.
\item (Teichm\"{u}ller map) For $r\in R$ define 
$ \ab{r}:= (r,r^p,r^{p^2},...)$.
\item $\tilde{X}(R)$ be the subgroup generated by $\{V^n\ab{r} \ | \ n\in \N_0, \ r\in R\}$. Define a topology on $R^{\N_0}$ using the sequence of decreasing subgroups  $\{V^n\tilde{X}(R) \}_{n\in\N}$.  
\item Define $X(R)$ as the closure of $\tilde{X}(R)$ in $R^{\N_0}$ w.r.t. this topology. Note that $X(R)$ is invariant under $V$.
\end{enumerate}
\end{definition}

\noindent{Properties of $X(R)$ :}\label{prop:ncxa}
\begin{enumerate}
\item $X: \Rings \to \Ab$ is a functor. 
\item For a non-commutative ring $R$, the topology on $X(R)$ induced by $\{V^n \tilde{X}(R)\}$ is finer than the one coming from product topology on $\R^{\N \cup \{0\}}$. Indeed, let $A:=\Z\ac{x, y, z}$. Then the element $\ab{x+y}+ \ab{-x}+ \ab{-y}+V\ab{z}$ is not in $V^n(\tilde{X}(A))$ for any $n \geq 1$. Hence, $(\{0\} \times A^{\N}) \cap \tilde{X}(A)$ is not open in the $V$-topology.
\item For $R \in \Rings$, $X(R) \cong \ilim_n \frac{X(R)}{V^n(X(R))}$. Thus, $X(R)$ has $V$-topology and 
every element of $X(R)$ can be written as $\sum_{n_i}V^{n_i}\ab{a_{n_i}}$ where for $M \in \N$, there will only be finitely many $n_i \leq M$ counted with repetitions.
\end{enumerate}

\begin{definition}[$X_I$ and $X_I^{\sat}$]\label{def:XI}
For $R \in \Rings$, consider the free presentation $$0 \to I \to A \to R \to 0 $$ where $A:=\Z \{ R\}$ be the non-commutative polynomial ring on the set $R$.  Let $X_I(A) \subset X(A)$ be the closed subgroup generated by $$\{V^n\ab{a}-V^n\ab{b} \ | \ n\geq 0,  \ a,b\in A \ \text{ such that } a-b\in I\}.$$ We will denote $X_I$ as a short for $X_I(A)$. We denote the $p$-saturation of $X_I$ by 
$$X_I^{\sat} := \{ \alpha \in X(A) \mid \ p^{\ell}\alpha \in X_I  \text{ for some } \ell \in \N_0 \} $$  
\end{definition}
\noindent It is easy to see that $X_I \subseteq X_I^{\sat}$.
\begin{remark}
For $R \in \Com$, the closed subgroup $X(I)$ generated by $\{V^n\ab{a} \ | \ n \in \N_0,  \ a\in  I\}$ as given in \cite[Prop.1.2]{cd} coincides with the closed subgroup $X_I(A)$ generated by 
$$\{V^n\ab{a}-V^n\ab{b} \ |a,\ b \in A \text{ such that } a-b \in I,  n \in \N_0 \}.$$  The subgroups $X(I)$ and $X_I(A)$ will not coincide for a non-commutative ring. For example, take a non-commutative polynomial ring $A:= \Z\{X, Y\}$ and $I:=\ab{X, Y} $ then $\ab{X} - \ab{Y}$ is in $X_I(A)$ but not $X(I)$. This is the reason why we need to modify the definition of $E$ so that it works in the non-commutative set-up. 
\end{remark}
\vspace{2mm}

\begin{definition}[$E(R)$]\label{def:e}
For $R \in \Rings$ we denote by $\bar{R}$ the quotient ring $\frac{R}{\ab{[R,R]}}$ where $\ab{[R,R]}$ is the ideal generated by the commutator subgroup $[R,R]$. Suppose there exists a morphism $\phi: S \to R$ in $\Rings$ where $S$, $\bar{S}$ are $p$-torsion free. We consider the free presentations of $R$ and $S$ as follows. 

\[
\begin{tikzcd}[column sep=small, row sep=1.5em]
0 \arrow[r] & J \arrow[d, dashed] \arrow[r] & \Z\{S\} \arrow[d, dashed,"\tilde{\phi}"] \arrow[r, "\pi_2"] & S \arrow[d, "\phi"] \arrow[r] & 0 \\
0 \arrow[r] & I \arrow[r] & \Z\{R\} \arrow[r, "\pi_1"] & R \arrow[r] & 0
\end{tikzcd}
\tag{1}
\] 

Here, $\tilde{\phi}$ is a lift of $\phi$. Applying the functor $X$ we get the following diagram.
\[
\begin{tikzcd}[column sep=small, row sep=1.5em]
 & X_J \arrow[d, dashed] \arrow[r] & X(\Z\{S\}) \arrow[d, dashed,"\tilde{\phi}"] \arrow[r] & X(S) \arrow[d, "\phi"] \arrow[r] & 0 \\
 & X_I \arrow[r] & X(\Z\{R\}) \arrow[r] & X(R) \arrow[r] & 0
\end{tikzcd}
\tag{2}
\] 

Following the notation as in the Definition \ref{def:XI}, we define 
$$\tilde{X_I} : = \text{ the closed subgroup of } X(\Z\{R\}) \text{ generated by  } \{X_I, \Sigma_I\}$$
where for a lift $\tilde{\phi}$ as above
\[
\begin{aligned}
\Sigma_I \;:=\; \bigl\{\, \alpha \in X(\Z\{R\}) \ \bigm|\;
&  \alpha = \tilde{\phi}(\beta) \text{ for some } \beta \in X_J^{\mathrm{sat}}
\bigr\}.
\end{aligned}
\]
We leave it to the reader to check that the set $\Sigma_I$ is independent of the lift $\tilde{\phi}$.
Finally, we define $$E(R):= \frac{X(\Z\{R\})}{\tilde{X_I}}$$ 
\end{definition}

\begin{remark}\label{rmk:e} We would like to note the following observations regarding $E(R)$ which directly follow from the definition. Some of these are technical in nature however they are needed later. 
\begin{enumerate}
\item The group $E(R)$ is naturally endowed with an endomorphism $V: E(R) \to E(R)$ and a set map $\ab{ \ }: R \to E(R)$ (both induced from that on $X(R)$). Here, $\ab{r} := \ab{[r]}$, where $[r] \in \Z\{R\}$ is a lift of $r$. It is easy to see from the definition of $X_I$ that, $\ab{[r]} (\text{ modulo } X_I)$  is independent of choice of a lift of $r$ in $\Z\{R\}$. 
\item For $R \in \Rings$, $X_I \subseteq \tilde{X_I} \subseteq X_I^{\sat}$. Indeed, for $\alpha \in \tilde{X_I}$, there exists $\beta \in X_J^{\sat}$ such that $\tilde{\phi}(\beta) = \alpha$. As $\beta \in X_J^{\sat}$, $p^r\beta \in X_J$ for some $r \in \N_0$. Since $\tilde{\phi}(X_J) \subseteq X_I$ we get that $p^r\alpha \in X_I$. Thus $\alpha \in X_I^{\sat}$.
\item For $R \in \Rings$, if $R$ and $\bar{R}$ both are $p$-torsion free then $\tilde{X_I} = X_I^{\sat}$. In particular, $E(R)$ is $p$-torsion free. To prove this, it is now enough to show that $\tilde{X_I} \supseteq X_I^{\sat}$. This is straightforward by taking $S = R$ in the definition of $\Sigma_I$.
\end{enumerate}
\end{remark}
\noindent We will now prove the first part of the Theorem \ref{main}. We will show that $E$ is a pre-Witt functor. Before showing this, we will prove the following important Lemma. The idea of proof of this lemma will be used repetitively later in this paper.\\

\vspace{2mm}
\noindent Let $R\in \Rings$. In the definition of $E(R)$ (see Definition \ref{def:e}), we chose the canonical free presentation $0 \to I \to \Z\{R\} \to R\to 0$ in order to define $E(R):= X(\Z\{R\})/\tilde{X_I}$. However if  $$ 0 \to K \to B \to R \to 0$$ is any other free presentation we can also define the group 
$$E_B(R) := \frac{X(B)}{\tilde{X_K}}$$
where $\tilde{X_K}$ is  defined similarly. 
We now show that 
\begin{lemma}\label{ind:e} $E(R) \cong E_B(R)$.
\end{lemma}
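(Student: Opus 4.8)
## Proof Plan

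The plan is to show that the two presentations give isomorphic groups by comparing both against the ``canonical'' one $0 \to I \to \Z\{R\} \to R \to 0$, exploiting the freeness of $\Z\{R\}$ together with the independence of $\Sigma_I$ from the chosen lift. First I would set up the two comparison maps: since $B$ is free and $\Z\{R\} \to R$ is surjective, lift the composite $B \to R$ to a ring homomorphism $g\colon B \to \Z\{R\}$ over $R$; dually, since $\Z\{R\}$ is free on the set $R$ and $B \to R$ is surjective, lift each generator to get a ring homomorphism $h\colon \Z\{R\} \to B$ over $R$. Applying the functor $X$ gives group homomorphisms $X(g)\colon X(B) \to X(\Z\{R\})$ and $X(h)\colon X(\Z\{R\}) \to X(B)$, both continuous for the $V$-topologies (they commute with $V$ and send Teichm\"uller elements to Teichm\"uller elements, hence send $\tilde X(-)$ into $\tilde X(-)$ and pass to closures).

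The key step is to check that $X(g)$ carries $\tilde{X_K}$ into $\tilde{X_I}$, and symmetrically $X(h)$ carries $\tilde{X_I}$ into $\tilde{X_K}$, so that both descend to $E_B(R) \to E(R)$ and $E(R) \to E_B(R)$. For the piece $X_K$ this is immediate: if $a - b \in K$ then $g(a) - g(b) \in I$ since $g$ is a ring map over $R$, so $X(g)$ sends the generators $V^n\ab{a} - V^n\ab{b}$ of $X_K$ into $X_I$, and by continuity into the closed subgroup $X_I \subseteq \tilde X_I$. For the piece $\Sigma_K$ I would use the stated lift-independence: an element of $\Sigma_K$ has the form $\tilde\psi(\beta)$ for some lift $\tilde\psi\colon \Z\{S\} \to B$ of a map $\phi\colon S \to R$ with $S, \bar S$ $p$-torsion free and $\beta \in X_J^{\sat}$; then $g \circ \tilde\psi\colon \Z\{S\} \to \Z\{R\}$ is again a lift of $\phi$, so $X(g)(\tilde\psi(\beta)) = (X(g)\circ X(\tilde\psi))(\beta)$ lands in $\Sigma_I$ by definition. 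Hence $X(g)(\tilde{X_K}) \subseteq \tilde{X_I}$, and the symmetric argument gives $X(h)(\tilde{X_I}) \subseteq \tilde{X_K}$.

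Finally I would check that the two induced maps $\bar g\colon E_B(R) \to E(R)$ and $\bar h\colon E(R) \to E_B(R)$ are mutually inverse. The composites $h \circ g\colon B \to B$ and $g \circ h\colon \Z\{R\} \to \Z\{R\}$ are ring endomorphisms lying over $\mathrm{id}_R$, but they need not be the identity on the nose; what saves us is that they \emph{become} the identity after passing to the quotient. Concretely, for $g \circ h$: on a Teichm\"uller generator $\ab{[r]}$ the difference $\ab{(g\circ h)[r]} - \ab{[r]}$ is of the form $\ab{a} - \ab{b}$ with $a - b \in I$ (both lift $r \in R$), hence lies in $X_I \subseteq \tilde X_I$; since $X(g\circ h)$ commutes with $V$ and these elements topologically generate $X(\Z\{R\})$, one gets $X(g\circ h) \equiv \mathrm{id}$ modulo $\tilde X_I$, so $\bar g \circ \bar h = \mathrm{id}_{E(R)}$, and symmetrically $\bar h \circ \bar g = \mathrm{id}_{E_B(R)}$ using that differences of lifts along $B \to R$ lie in $K \subseteq \tilde X_K$. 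The main obstacle I anticipate is the careful topological bookkeeping: one must verify throughout that ``agrees on the topological generators $V^n\ab{\cdot}$ and commutes with $V$'' genuinely forces ``agrees on all of $X$ modulo the relevant closed subgroup'', which relies on continuity of all maps involved and on the fact (Property (3) of $X$) that $X(R) \cong \varprojlim_n X(R)/V^n X(R)$, so that an equality established modulo every $V^n\tilde X$ is an honest equality in the completion.
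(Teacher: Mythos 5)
Your proposal is correct and takes essentially the same route as the paper: lift in both directions between the two free rings over $\mathrm{id}_R$, check that the induced maps on $X(-)$ carry $X_K$ into $X_I$ and $\Sigma_K$ into $\Sigma_I$ (the latter because post-composing a lift of $\phi$ with a ring map over $\mathrm{id}_R$ yields another lift), and observe that the composites are the identity modulo the closed subgroups since they are ring endomorphisms over $\mathrm{id}_R$. One small imprecision: the elements $V^n\ab{[r]}$ with $r\in R$ do \emph{not} topologically generate $X(\Z\{R\})$ (one needs $V^n\ab{a}$ for all $a\in\Z\{R\}$), but your argument applies verbatim to every such $a$, since $(g\circ h)(a)-a\in I$ for all $a$, so the congruence $X(g\circ h)\equiv\mathrm{id}\ (\mathrm{mod}\ \tilde{X_I})$ still follows.
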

\begin{proof} Suppose that we have following two free presentations of $R$.
 $$0 \to I \to A:=\Z\{R\} \to R \to 0$$
 $$0 \to K \to B \to R \to 0 $$ 
We need to show that $E(R) \cong E_B(R)$. We prove this result in following steps.\\

\noindent {\bf Step(1) :} In this step we will show that there exists a group homomorphism $\tilde{\psi}: X(A) \to X(B)$, such that $\tilde{\psi}(\tilde{X_I}) \subseteq \tilde{\psi}(\tilde{X_K})$. Hence it induces a well-defined morphism $\tilde{\psi}: E(R) \to E_B(R)$. \\

\noindent Consider the set maps $ R \stackrel{[\cdot]_1}{\longrightarrow}  A$ and $R \stackrel{[\cdot]_2}{\longrightarrow} B$. Define $\tilde{\psi}: X(A) \to X(B)$ as $$\psi(V^n\ab{[r]_1}) := V^n\ab{[r]_2}.$$ This induces a well defined group homomorphism $\tilde{\psi}: X(A)/X_I \to X(B)$.  It remains to show that $\tilde{\psi}(\tilde{X_I} ) \subseteq \tilde{X_K}$. It is enough to show that $\tilde{\psi}(\Sigma_I ) \subseteq \Sigma_K$. \\

\noindent Suppose there exists a morphism $\phi: S \to R$ where $S$ and $\bar{S}$ are $p$-torsion free. Consider the below diagrams similar to the one in Definition \ref{def:e}. \\

\[\begin{tikzcd} \label{diag1}
0 \arrow[r] & J \arrow[d, dashed] \arrow[r] & \Z\{S\} \arrow[d, "\tilde{\phi}", dashed] \arrow[r] & S \arrow[d, "\phi"] \arrow[r] & 0 \\
0 \arrow[r] & I \arrow[r] \arrow[d]  & A \arrow[r]\arrow[d, "\tilde{\psi}"]           & R \arrow[r]\arrow[d, "id"]          & 0 \\
0 \arrow[r] & K \arrow[r]           & B \arrow[r]           & R \arrow[r]           & 0
\end{tikzcd}\]

\noindent We get the following induced diagram,
\[\begin{tikzcd} \label{diag2}
 X_J \arrow[d, dashed] \arrow[r] & X(\Z\{S\}) \arrow[d, "\tilde{\phi}", dashed] \arrow[r] & X(S) \arrow[d, "\phi"] \arrow[r] & 0 \\
 X_I \arrow[r] \arrow[d]  & X(A) \arrow[r]\arrow[d, "\tilde{\psi}"]           & X(R) \arrow[r]\arrow[d, "id"]          & 0 \\
 X_K \arrow[r]           & X(B) \arrow[r]           & X(R) \arrow[r]           & 0
\end{tikzcd}\]

\noindent For $\alpha \in \Sigma_I$, $\alpha = \tilde{\phi}(\beta)$ for some $\beta \in X_J^{\sat}$. Thus $\tilde{\psi}(\alpha) = \tilde{\psi} \circ \tilde{\phi}(\beta)$ and hence $\tilde{\psi}(\alpha) \in \Sigma_K$. Thus we have a well defined homomorphism which we again denote as, 
$$\tilde{\psi} : E(R) \to E_B(R)$$

\noindent {\bf Step 2:} We will now show that $\tilde{\psi}: E(R) \to E_B(R)$ is an isomorphism. To prove this, we will establish a group homomorphism $\chi: E_B(R) \to E(R)$ and show that $\tilde{\psi} \circ \chi = id_{E_B(R)}$ and $\chi \circ \psi = id_{E(R)}$. Since $A$ and $B$ both are free rings,  we will reverse the roles of $A$ and $B$ in the step(1), to get a group homomorphism $\chi: E_B(R) \to E(R)$.  For $b \in \Z\{B\}$,

$$\tilde{\psi} \circ \chi(V^n\ab{b}) = V^n\ab{\tilde{\psi} \circ \chi(b)} \equiv V^n\ab{b} ( \text{mod } X_I) \text{ as }  \tilde{\psi} \circ \chi(b)-b \in I $$
\end{proof}

\noindent The following result, which is consequence of the above lemma, plays an important role in the proof of Theorem \ref{main}. 

\begin{lemma}\label{EisX}For a non-commutative free polynomial algebra $A$, $E(A) \cong X(A)$.
\end{lemma}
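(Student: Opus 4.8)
The plan is to use Lemma~\ref{ind:e} to compute $E(A)$ for a free algebra $A=\Z\{S\}$ by choosing the \emph{tautological} free presentation $0 \to 0 \to A \to A \to 0$, for which $B = A$ and $K = 0$. With this presentation, $X_K = X_0$ is the closed subgroup of $X(A)$ generated by $\{V^n\ab{a} - V^n\ab{b} \mid a - b \in 0\}$, which is just the trivial subgroup $\{0\}$. So the only nontrivial content is to show that $\tilde{X_0}$, i.e.\ the closed subgroup generated by $X_0 = \{0\}$ together with $\Sigma_0$, is also trivial; once that is done, $E_A(A) = X(A)/\tilde{X_0} = X(A)$, and Lemma~\ref{ind:e} gives $E(A) \cong E_A(A) \cong X(A)$.

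So the main step is to show $\Sigma_0 = \{0\}$. By Remark~\ref{rmk:e}(3), since $A = \Z\{S\}$ is a free algebra, $A$ is $p$-torsion free and $\bar A = A/\ab{[A,A]}$ is the free \emph{commutative} polynomial ring $\Z[S]$, which is also $p$-torsion free; hence $\tilde{X_0} = X_0^{\sat}$. But $X_0 = \{0\}$, and its $p$-saturation inside $X(A)$ is $\{\alpha \in X(A) \mid p^\ell \alpha = 0 \text{ for some } \ell\}$, which is exactly the $p$-torsion subgroup of $X(A)$. So it remains to check that $X(A)$ is $p$-torsion free for $A$ a free associative algebra. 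This should follow because $X(A)$ sits inside $A^{\N_0}$ (by Definition~\ref{def:ncxa}(4), $X(A)$ is the closure of $\tilde X(A)$ inside $A^{\N_0}$), and $A^{\N_0}$ is a product of copies of the torsion-free abelian group $A$, hence torsion free. Therefore $X_0^{\sat} = \{0\}$, so $\tilde{X_0} = \{0\}$, and $E_A(A) = X(A)$.

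I expect the only real subtlety to be bookkeeping: confirming that the definition of $\tilde{X_K}$ and $\Sigma_K$ in Lemma~\ref{ind:e} (for a general presentation $0 \to K \to B \to R \to 0$) applies verbatim when $K = 0$, so that $\Sigma_0$ is indeed the image of $X_J^{\sat}$ under lifts $\tilde\phi \colon X(\Z\{S\}) \to X(A)$ coming from morphisms $\phi \colon S \to A$ with $S, \bar S$ both $p$-torsion free --- and then observing that any such image lands in the $p$-saturation of the trivial group, i.e.\ the $p$-torsion of $X(A)$, which vanishes. An alternative, perhaps cleaner, route avoiding the choice of presentation: apply Remark~\ref{rmk:e}(3) directly to $R = A$ (a free algebra, so $A$ and $\bar A$ are $p$-torsion free) to conclude $\tilde{X_I} = X_I^{\sat}$ where $0 \to I \to \Z\{A\} \to A \to 0$; but this uses the large polynomial ring $\Z\{A\}$ rather than $A$ itself, so one still needs Lemma~\ref{ind:e} to descend to the tautological presentation. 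Either way, the substantive input is the torsion-freeness of $X(A)$ for free $A$, together with Lemma~\ref{ind:e}; no new computation with noncommutative polynomials is needed here.
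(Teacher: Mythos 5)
Your proposal is correct and follows exactly the paper's route: the paper's proof is the one-line ``take the tautological presentation $0 \to 0 \to A \xrightarrow{\mathrm{id}} A \to 0$ and apply Lemma~\ref{ind:e}.'' You simply make explicit the step the paper leaves implicit, namely that $\tilde{X_0} \subseteq X_0^{\sat}$ is the $p$-power torsion of $X(A)$, which vanishes because $X(A)$ is a subgroup of the torsion-free group $A^{\N_0}$ --- a correct and welcome completion of the argument.
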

\begin{proof} This follows by taking the free presentation $0 \to 0 \to A \xrightarrow{{\rm id}} A \to 0$ and using the Lemma \ref{ind:e} 
\end{proof}

\begin{theorem}\label{e:pre-witt}$E: \Rings \to \Ab$ is a pre-Witt functor.
\end{theorem}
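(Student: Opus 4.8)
The plan is to record first that $E$, $V$, and $\ab{\cdot}$ are functorial, and then to verify properties (1)--(4) of Definition~\ref{def:prewitt} one at a time, reducing each to the properties of $X$ recalled above together with Lemma~\ref{ind:e} and Remark~\ref{rmk:e}. For functoriality of $E$: given a ring map $f\colon R\to R'$, lift it to the ring map $\tilde f\colon\Z\{R\}\to\Z\{R'\}$ sending the generator indexed by $r$ to the one indexed by $f(r)$, so that $\tilde f(I)\subseteq I'$; applying $X$ and arguing as in Step~1 of the proof of Lemma~\ref{ind:e}, one checks that $X(\tilde f)$ carries $X_I$ into $X_{I'}$ and $\Sigma_I$ into $\Sigma_{I'}$ (post-compose the auxiliary map $\phi\colon S\to R$ with $f$), hence $X(\tilde f)(\tilde{X_I})\subseteq\tilde{X_{I'}}$ by closedness, and we obtain $E(f)$; compatibility with composition is immediate since $\widetilde{g\circ f}=\tilde g\circ\tilde f$. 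Functoriality of $V$ and of $\ab{\cdot}$ is automatic, since on $R^{\N_0}$ both are given coordinatewise by $V(r_0,r_1,\dots)=(0,pr_0,pr_1,\dots)$ and $\ab r=(r,r^p,r^{p^2},\dots)$, and ring maps act coordinatewise.

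For properties (1) and (2) I would work at the level of $R^{\N_0}$, using Remark~\ref{rmk:e}(1), which lets $\ab r$ in $E(R)$ be computed from an arbitrary lift of $r$. Property (1) is then immediate: $\ab 0=0$, and for $p\neq2$ one has $(-x)^{p^k}=-x^{p^k}$ for all $k$, so $\ab{-r}=-\ab r$ already in $R^{\N_0}$. For (2), the key computation is the identity $V\ab{a^p}-p\ab a=(-pa,0,0,\dots)$ in $R^{\N_0}$, valid for every $a$ and every $p$; choosing lifts compatibly (using $[x]+[y]$ as a lift of $x+y$), the combination $\bigl(V\ab{(x+y)^p}-p\ab{x+y}\bigr)-\bigl(V\ab{x^p}-p\ab x\bigr)-\bigl(V\ab{y^p}-p\ab y\bigr)$ is already $0$ in $X(\Z\{R\})$, so $x\mapsto V\ab{x^p}-p\ab x$ is additive into $E(R)$. (For arbitrary lifts the error is $(-pc,0,0,\dots)$ with $c\in I$, and $(-pc,0,0,\dots)=V\ab{c^p}-p\ab c\in X_I\subseteq\tilde{X_I}$ because $\ab c,\ab{c^p}\in X_I$.)

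For property (3) I would invoke two facts already available: $X(\Z\{R\})\cong\varprojlim_n X(\Z\{R\})/V^nX(\Z\{R\})$ as recorded above, and $\tilde{X_I}$ is a closed subgroup of $X(\Z\{R\})$ in the $V$-topology (built into Definition~\ref{def:e}). Then $E(R)=X(\Z\{R\})/\tilde{X_I}$ is complete for its $V$-topology, which coincides with the topology defined by $\{V^nE(R)\}_n$: the natural map $E(R)\to\varprojlim_n E(R)/V^nE(R)=\varprojlim_n X(\Z\{R\})/(\tilde{X_I}+V^nX(\Z\{R\}))$ is injective because $\tilde{X_I}=\bigcap_n(\tilde{X_I}+V^nX(\Z\{R\}))$ by closedness, and surjective by lifting a compatible system to a Cauchy sequence in $X(\Z\{R\})$ and passing to its limit. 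Property (4) is essentially Remark~\ref{rmk:e}(3): if $R$ and $\bar R$ are $p$-torsion free then $\tilde{X_I}=X_I^{\sat}$, and $X(\Z\{R\})/X_I^{\sat}$ is $p$-torsion free since $p^{\ell+1}\alpha\in X_I$ forces $\alpha\in X_I^{\sat}$.

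I do not expect a deep obstacle here: once the properties of $X$ and Remark~\ref{rmk:e} are in hand, properties (1), (2) and (4) are essentially formal, and (3) is the routine statement that a quotient of a complete topological group by a closed subgroup is complete. The point that needs the most care is the functoriality bookkeeping -- that $X(\tilde f)$ respects the subgroups $\tilde{X_I}$ and that the various constructions are independent of the lifts involved -- which is why I would organize it so as to piggy-back on the argument already given in the proof of Lemma~\ref{ind:e}.
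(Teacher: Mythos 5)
Your proposal is correct and follows essentially the same route as the paper: functoriality via Step~1 of Lemma~\ref{ind:e}, properties (1)--(2) inherited from $X(\Z\{R\})$, property (3) from completeness of $X(\Z\{R\})$ together with closedness and $V$-invariance of $\tilde{X_I}$, and property (4) from Remark~\ref{rmk:e}(3). You merely supply details the paper leaves implicit (the identity $V\ab{a^p}-p\ab{a}=(-pa,0,0,\dots)$ and the lift-independence bookkeeping), and these are all correct.
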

\begin{proof} We first observer that $E$ is a functor. Let $f : R_1 \to R_2$ be a morphism in $\Rings$. Consider the free presentations of $R_1$ and $R_2$,
\[\begin{tikzcd}
0 \arrow[r] & I \arrow[r] \arrow[d]  & \Z\{R_1\} \arrow[r]\arrow[d, "\psi"]           & R \arrow[r]\arrow[d, "f"]          & 0 \\
0 \arrow[r] & K \arrow[r]           & \Z\{R_2\} \arrow[r]           & R \arrow[r]           & 0 
\end{tikzcd}\]
Following similar arguments as in the proof of the Step(1) of Lemma \ref{ind:e} we get a group homomorphism $X(Z\{R_1\}) \to X(\Z\{R_2\})/\tilde{X_K}$ which factors via $E(R_1)$. \\

In order to show that $E$ is a pre-Witt functor we need to prove the four properties stated in \ref{properties}. The first two properties are easily seen to be satisfied by $E(R)$ since they are satisfied by $X(\Z\{R\})$. For the third property, we know that for $R \in \Rings$, $X(\Z\{R\})$ is $V$-complete. Moreover, $\tilde{X_I}$ is a closed subgroup of $X(\Z\{R\})$ where $X_I$ and $\Sigma_I$ are $V$-invariant. Thus $E(R)$ is $V$-complete. For the fourth property, assume that $R$ and $\bar{R}$ are $p$-torsion free. By the Remark \ref{rmk:e}(3), $\tilde{X_I} = X_I^{\sat}$, hence $E(R)$ is $p$-torsion free. Thus $E$ is a pre-Witt functor. 
\end{proof}

\noindent The remaining section is devoted to proving the following theorem which shows that the functor $E$ when restricted to commutative rings, gives the classical $p$-typical Witt functor.  
\begin{theorem}\label{e=e_c}For $R \in \Com$, $E(R) \cong W(R)$.
\end{theorem}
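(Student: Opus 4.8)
\textbf{Proof plan for Theorem \ref{e=e_c}.} The plan is to reduce the statement to the commutative computation of Cuntz--Deninger by carefully comparing the subgroup $\tilde{X_I}$ appearing in the definition of $E(R)$ with the subgroup $X(I)$ used in \cite{cd}. Fix $R \in \Com$ and its canonical free presentation $0 \to I \to A := \Z\{R\} \to R \to 0$, where now $A$ is the \emph{non-commutative} polynomial ring on the set $R$ but $I$ contains all commutators $[a,b]$ together with the kernel of $A \to R$. Since $R$ is commutative, $\bar{A}$ in the sense of Definition \ref{def:e} (that is $A/\langle[A,A]\rangle = \Z[R]$, the genuine polynomial ring) is $p$-torsion free, so $A$ admits the identity map $A \to A$ fitting into diagram (1) of Definition \ref{def:e} only in the degenerate sense; more usefully, one takes $S = \Z[R]$ (commutative polynomial ring), which is $p$-torsion free with $\bar S = S$, and the surjection $\phi : S \to R$. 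Then $J = \ker(S \to R)$ is the usual ideal, and the lift $\tilde\phi : \Z\{S\} \to A$ lets us transport the classical picture into the non-commutative one.

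First I would establish that, in the commutative presentation, $X_I$ already contains all the ``commutator noise'': concretely, $\langle[A,A]\rangle \subseteq I$ forces $V^n\ab{a} - V^n\ab{b} \in X_I$ whenever $a \equiv b$ modulo commutators, so the natural map $X(A) \to X(\Z[R])$ (induced by abelianization $A \surj \Z[R]$) has kernel contained in $X_I$, and in fact $X(A)/X_I \cong X(\Z[R])/X(\bar I)$ where $\bar I$ is the image of $I$. This is exactly the setting of \cite[Prop.~1.2]{cd}: for the commutative polynomial ring $\Z[R]$ with ideal $\bar I$, one has $X(\Z[R])/X(\bar I) \cong W(R)$, and under this identification $X(\bar I)$ is generated by $\{V^n\ab{a} : a \in \bar I\}$, which by the Remark following Definition \ref{def:XI} coincides with $X_{\bar I}$. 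So the content reduces to showing $\tilde{X_I}$ maps onto exactly $X(\bar I)$ under $X(A) \surj X(\Z[R])$ — equivalently, that passing from $X_I$ to $\tilde{X_I} = X_I + \Sigma_I$ does not enlarge the quotient beyond $W(R)$, and that it does not shrink it either.

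The key steps, in order: (i) show $\tilde{X_I} \supseteq X_I^{\sat}$ is false in general but that the \emph{image} of $\tilde{X_I}$ in $X(\Z[R])$ equals the image of $X_I^{\sat}$, using Remark \ref{rmk:e}(2) which gives $X_I \subseteq \tilde{X_I} \subseteq X_I^{\sat}$, so the three subgroups have the same image in $X(\Z[R])$ \emph{provided} $X_I^{\sat}$ and $X_I$ already have the same image there; (ii) prove this last point by invoking the fact that $W(R)$ for $R$ with $\bar R = R$ is built so that $X(\bar I)$ is $p$-saturated in $X(\Z[R])$ when $\Z[R]/\bar I = R$ is $p$-torsion free — and when $R$ is \emph{not} $p$-torsion free, observe that $p$-saturating downstairs is harmless because $W(R) = X(\Z[R])/X(\bar I)$ is computed the same way regardless (here one uses that in the commutative world $X(\bar I) = X(\bar I)^{\sat}$ follows from the flatness/explicit-polynomial description of $W$); (iii) conclude $E(R) = X(A)/\tilde{X_I} \cong X(\Z[R])/X(\bar I) \cong W(R)$, functorially, by checking the comparison is compatible with the presentation-independence of Lemma \ref{ind:e} and with the universal comparison to $W$ on $\Com$.

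\textbf{Main obstacle.} I expect the crux to be step (ii): controlling the $p$-saturation. The subtlety flagged in the paper's own Remark is that properties (4) had to be modified precisely so that $E$ restricts to $W$, and the mechanism is that $\Sigma_I$ (built from $X_J^{\sat}$ for a $p$-torsion-free resolving ring $S$) feeds in exactly the ``divided'' elements that the classical Witt polynomials would produce but that raw $X_I$ might miss. Making rigorous that $\Sigma_I$ contributes \emph{nothing new} modulo $X(\bar I)$ — i.e. that $\tilde\phi(X_J^{\sat})$ already lies in $X_I$ up to the commutator kernel — requires knowing that $X_J$ is $p$-saturated inside $X(\Z\{S\})$ \emph{after abelianizing}, which is the commutative input $X(J_{\mathrm{comm}}) = X(J_{\mathrm{comm}})^{\sat}$. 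So the real work is a careful diagram chase through diagram (2) combined with the one genuinely commutative fact about $p$-saturation of the ideal-generated subgroups, which is already implicit in \cite{cd}; everything else is bookkeeping using Lemmas \ref{ind:e} and \ref{EisX}.
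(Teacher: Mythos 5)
Your plan follows essentially the same route as the paper's proof: compare the presentation $\Z\{R\}$ with the free commutative presentation $\Z[R]$, show that $\tilde{X_I}$ (and in particular $\Sigma_I$, via a diagram chase through the abelianizations) lands inside the ideal-generated subgroup downstairs using the Cuntz--Deninger saturation result, build a section to get the inverse, and then quote $E_C(R)\cong W(R)$ from \cite{cd}. One small correction to your step~(ii): the saturation statement $X_K = X_K^{\sat}$ you need is \cite[Cor.~2.10]{cd} applied to the ambient free commutative ring $\Z[R]$ (and to $\Z[\bar S]$ for the resolving rings), which is always $p$-torsion free, so no case distinction on whether $R$ itself has $p$-torsion is required.
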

\begin{proof} Let $ R \in \Com$. We denote by $E_C: \Com \to \Ab$ the pre-Witt functor defined in \cite[Section 2]{ps}. Note that in \cite{ps}, it was denoted by $E :\Com \to \Ab$, however to avoid conflict with  $E:\Rings \to \Ab$ (see Definition \ref{def:e}), we denote it by $E_C$ instead. We know that $E_C(R) \cong W(R)$ (see \cite[Page 561]{cd}), hence it suffices to show that $E(R)\cong E_C(R)$ whenever $R \in \Com$. We will prove the result in following steps. \\

\noindent {\bf Step 1:} In this step we will prove that there exists a group homomorphism say $\tilde{f}: X(\Z\{R\}) \to E_C(R)$ which factors via $E(R)$. Consider following two presentations of $R$.\\
\vspace{3mm}
 $0 \to I \to \Z\{R\} \stackrel{\pi_1}{\longrightarrow} R \to 0 \text{ \ and \ }$ 
 $0 \to K \to \Z[R] \stackrel{\pi_2}{\longrightarrow} R \to 0$. \\
 Since $R$ is commutative, $\pi_1$ factors via $\Z[R]$. Thus,  $\exists \ \tilde{f}:  \Z\{R\} \to \Z[R]$ such that $\pi_1 = \pi_2 \circ \tilde{f}$ and we get the following commutative diagram.
\[\begin{tikzcd}
0 \arrow[r] & I \arrow[r] \arrow[d]  & \Z\{R\} \arrow[r,"\pi_1"]\arrow[d, "\tilde{f}"]           & R \arrow[r]\arrow[d, "\text{id}"]          & 0 \\
0 \arrow[r] & K \arrow[r]           & \Z[R] \arrow[r,"\pi_2"]           & R \arrow[r]           & 0 
\end{tikzcd}\]
Applying the functor $X$, we get the induced morphism
$\tilde{f}: X(\Z\{R\}) \to E_C(R)$ given by $\tilde{f}(V^n\ab{\alpha}) : = V^n\ab{\tilde{f}(\alpha)}$. \\

\noindent Enough to show that $\tilde{f}(\tilde{X_I}) \subseteq X_K$. Since $\tilde{f}(X_I) \subseteq X_K$, it remains to show that $\tilde{f}(\Sigma_I) \in X_K$. \\ 

\noindent Suppose $\alpha \in \Sigma_I$, i.e. $\exists \ S \in \Rings$ such that $S$ and $\bar{S}$ are p-torsion free  and $\exists \ S \xrightarrow{\phi} R$. Moreover,  $\tilde{\phi}(\beta)=\alpha$ for some $\beta \in X_J^{\sat}$. As $R$ is commutative ring, $\phi$ factors through $\bar{S}$, i.e. $\exists \ \psi: \bar{S} \to R$ such that $\psi \circ \pi = \phi$. Thus, we need to consider the extended diagram in $\Rings$.

\[\begin{tikzcd}
               & 0 \arrow[rr] &                                          & J \arrow[rr] \arrow[d] &                                                            & \Z\{S\} \arrow[rr] \arrow[d, "\tilde{\pi}"{pos=0.3}] &                                                      & S \arrow[rr] \arrow[d,"\pi"] &   & 0 \\
             & 0 \arrow[rr] &                                          & \bar{J} \arrow[rr] \arrow[dd] \arrow[ld, dashed] &                                                            & \Z\{\bar{S}\} \arrow[rr] \arrow[dd, "\tilde{\psi}"{pos=0.3}] \arrow[ld, dashed, "\pi' "] &                                                      & \bar{S} \arrow[rr] \arrow[ld, dashed, "id"] \arrow[dd,"\psi"{pos=0.3}] &   & 0 \\
0 \arrow[rr] &              & M \arrow[rr, dashed] \arrow[rdd, dashed] &                                            & {\Z[\bar{S}]} \arrow[rr, dashed] \arrow[rdd, dashed, "\chi"{pos=0.3}] &                                                   & \bar{S} \arrow[rr, dashed] \arrow[rdd, dashed] &                                            & 0 &   \\
             & 0 \arrow[rr] &                                          & I \arrow[rr] \arrow[d] &               & \Z\{R\} \arrow[rr,"\pi_1"] \arrow[d, "\tilde{f}"{pos=0.3}] &                                                 & R \arrow[rr] \arrow[d, "id"] &   & 0 \\
             & 0 \arrow[rr] &                 & K \arrow[rr]  &                             & {\Z[R]} \arrow[rr,"\pi_2"]  &               & R \arrow[rr]  &            &  0
\end{tikzcd}\]

\noindent Considering the diagram after applying the functor $X$ (see Diagram(2) in Definition \ref{def:e}), it  is easy to see that $\tilde{\pi}(X_J^{\sat}) \subseteq X_{\bar{J}}^{\sat}$, $\tilde{\psi}(X_{\bar{J}}^{\sat}) \subseteq X_I^{\sat} $ and $\tilde{f}(X_I^{\sat}) \subseteq X_K^{\sat}$.  Also note that $\Z[R]$ being commutative, $\tilde{f} \circ \tilde{\psi}$ factors via $\Z[\bar{S}]$. Since $\Z[\bar{S}]$ (resp. $\Z[R]$) is a commutative $p$-torsion free ring, by Corollary \cite[Cor. 2.10]{cd}, $X_M^{\sat} = X_M$ (resp. $X_K^{\sat} = X_K$). Thus,
$$\tilde{f}(\alpha) = \tilde{f} \circ \tilde{\phi}(\beta) = \tilde{f} \circ \tilde{\psi}\circ \tilde{\pi}(\beta) = \chi \circ \pi' \circ \tilde{\pi}(\beta) \in X_K$$
Therefore there exists a well defined group homomorphism,

$$\tilde{f}: E(R):=\frac{X(\Z\ac{R})}{\tilde{X_I} }\longrightarrow  \frac{X(\Z[R])}{X_K} =: E_c(R).$$

\noindent {\bf Step 2:} We now will establish a group homomorphism say $\tilde{g} : E_C(R) \to E(R)$ such that $\tilde{g} \circ \tilde{f} = id_{E(R)}$ and $\tilde{f} \circ \tilde{g} = id_{E_C(R)}$.\\

Consider the short exact sequence and the following diagram. 
\[\begin{tikzcd}
     &                 &    0\arrow[d]   &  0 \arrow[d] &         & \\
    &                 &  I \arrow[d] \arrow[r] &   K\arrow[d]       &               & \\
0 \arrow[r] & N \arrow[r] & \Z\{R\} \arrow[r]\arrow[d, "\pi_1"]  & \Z[R] \arrow[r]\arrow[d, "\pi_2"] \arrow[l, "\tilde{g}", dashed, bend right=49] & 0 \\
 & 0  \arrow[r]    & R \arrow[r, "id"]\arrow[d] & R \arrow[r]\arrow[d] & 0 \\
  &  & 0 & 0 & 
\end{tikzcd}\]
Since $\Z[R]$ is a free commutative ring, there exists a set theoretic section say $\tilde{g}: \Z[R] \to \Z\{R\}$, which we will now describe. We have set theoretic sections $R \stackrel{[\cdot]_1}{\longrightarrow} \Z\{R\}$ and $R \stackrel{[\cdot]_2}{\longrightarrow} \Z[R]$. We fix an ordering on the set $\text{Im}(R) \subset \Z\{R\}$ and define a set map $\tilde{g} : \Z[R] \to \Z\{R\}$  by 
$$\sum_{j=1}^m a_{i_1i_2\cdots i_j}[r_{i_1}]_2^{n_1}\cdots [r_{i_j}]_2^{n_j} \mapsto \sum_{j=1}^m a_{i_1i_2\cdots i_j}[r_{i_1}]_1^{n_1}\cdots [r_{i_j}]_1^{n_j}$$
 
 \noindent We get a induced well defined set map on the quotients $\tilde{g}: X(\Z[R])/X_K \to X(\Z\{R\})/X_N$. This is also an additive group homomorphism. Note that $N \subseteq I$, thus $X_N \subseteq X_I \subseteq \tilde{X_I}$. Thus we get a group homomorphism
  $$\tilde{g}: E_C(R) := X(\Z[R])/X_K \longrightarrow X(\Z\{R\})/X_I \twoheadrightarrow X(\Z\{R\})/\tilde{X_I} =: E(R)$$

\noindent Now for $\alpha \in X(\Z[R])$, $\tilde{f} \circ \tilde{g}(V^n\ab{\alpha})= \tilde{f}V^n(\ab{\tilde{g}(\alpha)}) = V^n\ab{\tilde{f}\circ \tilde{g}(\alpha)} \equiv V^n\ab{\alpha} (\text{mod } X_K )$. \\ 
\noindent Therefore,  $\tilde{f} \circ \tilde{g} = id_{E_C(R)}$. Similarly, for $\beta \in X(\Z\{R\})$, $\tilde{g} \circ \tilde{f}(V^n\ab{\beta}) = V^n\ab{\tilde{g}\circ\tilde{f}(\beta)}$ and $V^n\ab{\tilde{g}\circ\tilde{f}(\beta)} \equiv V^n\ab{\beta}(\text{mod } X_I )$.
\end{proof}

\noindent This proves Theorem \ref{main}(1). The proof of Theorem \ref{main}(2) is postponed to the Section 4. In the next section, assuming Conjecture \ref{zind}, we show that $E$ is a universal pre-Witt functor if $p\neq 2$. 
\section{Proof of Theorem \ref{main2}}
 \noindent As part of the proof of Theorem \ref{main2}, we construct of a functor $C: \Rings \to \mathsf{Ab}$ which is obtained by enforcing the properties \ref{properties} (1)-(3). It is less straightforward to see that $C$ satisfies \ref{properties}(4). We introduce a temporary notion of a weak pre-Witt functor (see Definition \ref{def:weakpre}) to show that $C$ is a pre-Witt functor (see Theorem \ref{universalweakprewitt}). We will in fact show that $C$ is a universal pre-Witt functor. Essentially, this section has a lot of conceptual overlap with \cite[Section 3]{ps}. We will follow the arguments used in \cite[Section 3]{ps}.

\begin{definition}[The functor $C$]\label{def:C}
For any associative ring $R$ and a prime $p\neq 2$, we first define a group $G(R)$ as follows. Let 
\begin{enumerate}
\item $\tilde{G}(R)$ be the free abelian group generated by symbols $\{ V^n_r \ | \ n\in \N_0 , \ 0\neq r\in R\}$. Define $V^n_0:=0$ for all $n\geq0$. 
\item A set map $R \xrightarrow{\ab{\ } } \tilde{G}(R)$ given by $\ab{r} := V^0_r$.
\item A homomorphism $\tilde{G}(R) \xrightarrow{V} \tilde{G}(R)$ defined by $V(V^n_r):= V^{n+1}_r$.
\item $H(R)\subset \tilde{G}(R)$ be the subgroup generated by the set: 
$$  \Union_n\left(\{(V^n_{(x+y)^p} - p V^{n-1}_{x+y}) - (V^n_{x^p} - p V^{n-1}_{x}) - (V^n_{y^p} - p V^{n-1}_{y})\ | \ x, y \in R \} \Union \{V^n_r+V^n_{-r} \ | \ r \in R\}\right)$$ 
\item $\tilde{H}(R)$ be the $p$-saturation of $H(R)$, i.e.
$ \tilde{H}(R):= \{ \alpha \in \tilde{G}(R) \ | \ p^\ell \alpha \in H(R) \ \text{for some } \ell >0\}$.
\item Let $G^0(R)$ denote the completion of $\tilde{G}(R)/\tilde{H}(R)$ by the $V$-filtration. Let $G(R)$ be the quotient of $G^0(R)$ modulo the closed subgroup generated by $p$-power torsion elements. Note that the construction of $G(R)$ is functorial in $R$.\\
\end{enumerate}
Consider the free presentation $0 \to I \to A \to R \to 0 $ where $A:=\Z\{R\}$. For $x \in R$ we denote the corresponding variable in $A$ by $[x]$. Let $G_I(A) \subset G(A)$ is the closed subgroup generated by $$\{V^n_a-V^n_b \ | \ n\geq 0,  \ a,b\in A \ \text{ such that } a-b\in I\}.$$

\noindent Now define $$C(R):= \frac{G(A)}{\tilde{G_I}}$$ where $\tilde{G_I}$ is subgroup generated by $G_I$ and $\Sigma_I$ where $\Sigma_I$ and $G_I$ are defined similar to the definition of $\Sigma_I$ and $X_I$ as in Definition \ref{def:e}. 
\end{definition} 

\begin{remark} We have a well defined group homomorphism $V: C(R)\to C(R)$ and a well defined map of sets $R \xrightarrow{\ab{ \ }} C(R)$ induced by the corresponding maps on $G(\Z[R])$. Any ring homomorphism $f: R \to S,$ gives a group homomorphism $G(R) \to G(S), V^n_r \mapsto V^n_{f(r)}$ which is compatible with $V$ and hence a group homomorphism $C(f): C(R) \to C(S)$. 
\end{remark}

Let $R\in \Rings$. In the definition of $C(R)$ we chose the canonical free presentation $0 \to I \to \Z\{R\} \to R\to 0$. However if  $ 0 \to K \to B \to R \to 0$ is any other free presentation we can also define the group 
$$C_B(R) := \frac{G(B)}{\tilde{G_K}}$$
Following the arguments as in Lemma \ref{ind:e} and Lemma \ref{EisX} we can prove the following results.

\begin{lemma}\label{ind:C} $C(R) \cong C_B(R)$.
\end{lemma}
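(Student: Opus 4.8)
\textbf{Proof proposal for Lemma \ref{ind:C}.}

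The plan is to mimic the two-step argument used in the proof of Lemma \ref{ind:e} verbatim, replacing the functor $X$ by the functor $G$ of Definition \ref{def:C} throughout. First I would fix two free presentations $0 \to I \to A := \Z\{R\} \to R \to 0$ and $0 \to K \to B \to R \to 0$, together with set-theoretic sections $[\,\cdot\,]_1 \colon R \to A$ and $[\,\cdot\,]_2 \colon R \to B$. Since $B$ is a free associative ring, the composite $R \xrightarrow{[\,\cdot\,]_2} B \to R$ lets me lift $\mathrm{id}_R$ to a ring homomorphism $\psi \colon A \to B$ (send each generator $[r]_1$ of $A$ to $[r]_2 \in B$); by functoriality of $G$ this induces $\tilde\psi \colon G(A) \to G(B)$ with $\tilde\psi(V^n_a) = V^n_{\psi(a)}$.

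\textbf{Step 1: $\tilde\psi$ descends to $C(R) \to C_B(R)$.} I need $\tilde\psi(\tilde{G_I}) \subseteq \tilde{G_K}$, i.e. $\tilde\psi(G_I) \subseteq G_K$ and $\tilde\psi(\Sigma_I) \subseteq \Sigma_K$. The first inclusion is immediate: if $a - b \in I$ then $\psi(a) - \psi(b) \in K$ because $\psi$ is compatible with the projections to $R$, so $\tilde\psi(V^n_a - V^n_b) = V^n_{\psi(a)} - V^n_{\psi(b)} \in G_K$. For the second, I run exactly the diagram from Step 1 of Lemma \ref{ind:e}: given $\phi \colon S \to R$ with $S, \bar S$ $p$-torsion free, a lift $\tilde\phi \colon \Z\{S\} \to A$, and $\beta \in G_J^{\mathrm{sat}}$ with $\alpha = \tilde\phi(\beta)$, the composite $\psi \circ \tilde\phi \colon \Z\{S\} \to B$ is again a lift of $\phi$, so $\tilde\psi(\alpha) = (\tilde\psi \circ \tilde\phi)(\beta) \in \Sigma_K$ by the very definition of $\Sigma_K$ (independence of the chosen lift is what makes this clean). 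Hence $\tilde\psi$ induces a well-defined $\tilde\psi \colon C(R) \to C_B(R)$.

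\textbf{Step 2: $\tilde\psi$ is an isomorphism.} Reversing the roles of $A$ and $B$ (both are free associative rings) yields a homomorphism $\chi \colon C_B(R) \to C(R)$ coming from a ring lift $B \to A$ of $\mathrm{id}_R$. For $b \in B$ the element $\chi\psi(b) - b$, or rather its image, lies in $K$ since both $\psi$ and the lift defining $\chi$ commute with the projections to $R$; therefore $\tilde\psi \circ \chi(V^n_{\ab{b}}) = V^n_{\ab{\psi\chi(b)}} \equiv V^n_{\ab{b}} \pmod{G_K}$, and since such elements generate $G(B)$ topologically and both maps are continuous, $\tilde\psi \circ \chi = \mathrm{id}_{C_B(R)}$; symmetrically $\chi \circ \tilde\psi = \mathrm{id}_{C(R)}$. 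This gives $C(R) \cong C_B(R)$.

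I do not expect a genuine obstacle here — the only point requiring a moment's care is checking that the generating relations of $H$ (and hence the saturation $\tilde H$ and the $V$-completion) are respected by ring homomorphisms, which holds because $V^n_{(x+y)^p} - pV^{n-1}_{x+y} - \cdots$ and $V^n_r + V^n_{-r}$ are manifestly functorial in $R$; consequently $G$, and then $G_I$ and $\Sigma_I$, behave exactly as $X$, $X_I$, $\Sigma_I$ did in Lemma \ref{ind:e}, and the proof transports word for word.
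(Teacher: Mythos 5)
Your proposal is correct and follows exactly the route the paper intends: the paper gives no separate proof of Lemma~\ref{ind:C}, stating only that one follows the arguments of Lemma~\ref{ind:e} with $X$ replaced by $G$, which is precisely what you do (including the one genuinely new point, namely that the generators of $H$, its $p$-saturation, the $V$-completion, and the quotient by $p$-power torsion are all functorial under ring homomorphisms). Apart from a harmless notational slip in Step~2 ($V^n_{\ab{b}}$ should read $V^n_b$, and $\chi\psi$ should be $\psi\chi$), the argument is complete.
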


\begin{corollary}\label{ceqg}
If $A=\Z\{S\}$ is a free algebra, then $C(A) \cong G(A)$.
\end{corollary}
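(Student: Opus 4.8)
The statement to prove is Corollary~\ref{ceqg}: for a free algebra $A = \Z\{S\}$, one has $C(A) \cong G(A)$. The plan is to mimic exactly the two-step argument used for $E$ in Lemma~\ref{ind:e} and Lemma~\ref{EisX}. First I would invoke Lemma~\ref{ind:C}: since $C(R)$ is independent of the choice of free presentation $0 \to K \to B \to R \to 0$ used to define $C_B(R) = G(B)/\tilde{G_K}$, it suffices to compute $C(A)$ using a particularly convenient presentation of $A$ itself. The convenient choice, just as in Lemma~\ref{EisX}, is the trivial (tautological) presentation
\[
0 \to 0 \to A \xrightarrow{\ \mathrm{id}\ } A \to 0.
\]
For this presentation the "kernel" is $K = 0$, so $G_K(A)$ is the closed subgroup generated by $\{V^n_a - V^n_b \mid a - b \in 0\}$, which is just the trivial subgroup; and $\Sigma_K$, being the image of $\tilde\phi(X_J^{\sat})$-type terms for morphisms $S \to A$ with $S, \bar S$ $p$-torsion free, must be handled by taking $S = A$ itself — but here one must check that $A = \Z\{S\}$ is already $p$-torsion free and that $\bar A = A/\langle[A,A]\rangle$ is $p$-torsion free (it is a polynomial ring $\Z[S]$, hence $p$-torsion free). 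Granting this, $\Sigma_K$ contributes nothing beyond $0$-saturation, so $\tilde{G_K} = 0$ and $C_A(A) = G(A)/0 = G(A)$.

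The key steps, in order, would be: (1) State and apply Lemma~\ref{ind:C} to reduce to any free presentation; (2) choose the identity presentation $0 \to 0 \to A \to A \to 0$; (3) observe $G_K = 0$ since the generating set $\{V^n_a - V^n_b : a-b \in 0\}$ is $\{0\}$; (4) observe that taking $S = A$ in the definition of $\Sigma_K$ is legitimate — this needs $A$ and $\bar A = \Z[S]$ to be $p$-torsion free, and then for this choice $J = 0$ as well, so $X_J^{\sat}$ (or its $G$-analogue) is trivial and $\Sigma_K = \{0\}$; (5) conclude $\tilde{G_K} = 0$, hence $C_A(A) = G(A)$, and combine with step (1) to get $C(A) \cong C_A(A) = G(A)$. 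I would phrase the proof tersely: "This follows by taking the free presentation $0 \to 0 \to A \xrightarrow{\mathrm{id}} A \to 0$ and using Lemma~\ref{ind:C}," exactly parallel to the one-line proof of Lemma~\ref{EisX}, possibly adding a sentence noting that $\bar A \cong \Z[S]$ is $p$-torsion free so that the saturation operations in $\tilde{G_K}$ are vacuous.

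The main (and only) obstacle is essentially bookkeeping: one must be sure that Lemma~\ref{ind:C} has genuinely been established with enough generality to cover the degenerate presentation with $B = A$ and $K = 0$ (the proof of Lemma~\ref{ind:e}, which Lemma~\ref{ind:C} is said to follow, works for arbitrary free presentations, so this is fine), and one must confirm that the functor $G$ applied to the free algebra behaves as expected — in particular that $\tilde{G}(A)/\tilde{H}(A)$, its $V$-completion, and the quotient by $p$-power torsion are all well-defined and that the resulting $G(A)$ is what appears on the right-hand side. Since $G$ is defined purely functorially in Definition~\ref{def:C}(1)--(6) without reference to any presentation, there is no circularity: $G(A)$ is simply the value of this construction on $A$, and the content of the corollary is that the further quotient $\tilde{G_K}$ defining $C$ collapses when $R$ is already free. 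No nontrivial estimate or new idea beyond the analogue for $E$ is required.
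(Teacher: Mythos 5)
Your proposal is correct and follows exactly the route the paper intends: reduce via Lemma~\ref{ind:C} to the tautological presentation $0 \to 0 \to A \xrightarrow{\mathrm{id}} A \to 0$, for which $G_K=0$ and $\tilde{G_K}=0$, mirroring the one-line proof of Lemma~\ref{EisX}. One small repair: $\Sigma_K$ is not something you get to compute by \emph{choosing} $S=A$ --- it is the union of the images of $G_J^{\sat}$ over \emph{all} admissible $\phi\colon S\to A$ with $S,\bar S$ $p$-torsion free --- but the conclusion still holds because for any such $\phi$ and $\beta\in G_J^{\sat}$ one has $p^{\ell}G(\tilde\phi)(\beta)\in G(\tilde\phi)(G_J)\subseteq G_K=0$, so every element of $\Sigma_K$ is $p$-power torsion in $G(A)$ and hence vanishes by step (6) of Definition~\ref{def:C}.
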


\noindent We will eventually prove the following result.
\begin{theorem}
For $p \neq 2$, the functor $C$ is a universal pre-Witt functor.
\end{theorem}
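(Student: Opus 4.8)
The plan is to mirror the structure of \cite[Section 3]{ps}, where the analogous result for the commutative pre-Witt functor was established. First I would verify that $C$ is a pre-Witt functor. Properties \ref{properties}(1)--(3) are built into the definition of $G(R)$: the Teichm\"uller map and $V$ are there by construction, the relations $\ab{0}=0$ and (for $p\neq 2$) $\ab{-r}=-\ab{r}$ are imposed via the generator set of $H(R)$, the additivity of $x\mapsto V\ab{x^p}-p\ab{x}$ is imposed by the first family of generators of $H(R)$, and $V$-completeness holds because $G^0(R)$ is defined as a $V$-completion and we then quotient by a closed subgroup (so the result is still complete), and passing to $C(R)=G(A)/\tilde{G_I}$ preserves this since $G_I$ and $\Sigma_I$ are $V$-invariant closed subgroups. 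The delicate point is property \ref{properties}(4): that $R$ and $\bar R$ $p$-torsion free implies $C(R)$ $p$-torsion free. For this I would introduce the ``weak pre-Witt functor'' notion alluded to in the text (Definition~\ref{def:weakpre}), which presumably drops (4), show $C$ is the universal weak pre-Witt functor by a direct presentation argument, and then separately argue (4) using the $p$-saturation $\tilde H(R)$ together with the explicit quotient by $p$-power torsion in the definition of $G(R)$ --- this is where Theorem~\ref{universalweakprewitt} and its proof enter, and it is the step I expect to be the main obstacle, because controlling $p$-torsion in the non-commutative setting is exactly what forced the modification of property (4) and what Conjecture~\ref{zind} is designed to handle.

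Once $C$ is known to be a pre-Witt functor, universality means: for any pre-Witt functor $F$ there is a unique natural transformation $C\to F$. I would construct this as follows. Given $R\in\Rings$ and the canonical presentation $0\to I\to A=\Z\{R\}\to R\to 0$, the functor $F$ supplies $F(A)$ with its Verschiebung $V$ and Teichm\"uller map $\ab{\cdot}_F\colon A\to F(A)$. Define $\tilde G(A)\to F(A)$ on generators by $V^n_a\mapsto V^n\ab{a}_F$; this is a homomorphism out of the free abelian group $\tilde G(A)$. The first family of generators of $H(A)$ maps to zero because $F$ satisfies \ref{properties}(2), the second family maps to zero by \ref{properties}(1) (here $p\neq 2$ is used), so the map descends to $\tilde G(A)/H(A)$; since $F(A)$ is $p$-torsion free when $A$ is a free ring (free rings and their abelianizations are $p$-torsion free, so \ref{properties}(4) applies), the map kills $\tilde H(A)$ and factors through $\tilde G(A)/\tilde H(A)$; by \ref{properties}(3) it extends to the $V$-completion $G^0(A)$ and, again by $p$-torsion-freeness of $F(A)$, descends to $G(A)$. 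Thus we get $G(A)\to F(A)$, and one checks it carries $G_I$ into $0$ in $F(R)$ (since $a-b\in I$ means $a,b$ have the same image in $R$, and $F$ is functorial) and carries $\Sigma_I$ into $0$ as well: for $\alpha=\tilde\phi(\beta)$ with $\beta\in X_J^{\sat}$ (or its $G$-analogue) coming from $\phi\colon S\to R$ with $S,\bar S$ $p$-torsion free, functoriality of $F$ together with $p$-torsion-freeness of $F(S)$ forces the image to lie in the image of the relations, hence vanish in $F(R)$. This yields a natural map $C(R)=G(A)/\tilde G_I\to F(R)$, and naturality in $R$ follows from Lemma~\ref{ind:C} (independence of presentation) exactly as in the proof that $C$ is well-defined.

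For uniqueness: any natural transformation $\eta\colon C\to F$ must satisfy $\eta(\ab{r})=\ab{r}_F$ and commute with $V$, hence $\eta(V^n\ab{r})=V^n\ab{r}_F$ is determined on the dense subgroup generated by the $V^n\ab{r}$; since both $C(R)$ and $F(R)$ are $V$-complete (\ref{properties}(3)) and $\eta$ is continuous (being a homomorphism respecting the $V$-filtration), $\eta$ is determined everywhere. Finally, combining this with Lemma~\ref{EisX}-type identifications and the earlier Theorem~\ref{e=e_c}, one would cross-check consistency: $C$ restricted to $\Com$ and compared with $E_C\cong W$. The main obstacle, as noted, is step one's property (4) for $C$; I expect the argument there to require the full force of the ``weak pre-Witt'' detour and a careful analysis of how the $p$-saturation $\tilde H(R)$ interacts with the generators of $H(R)$, paralleling \cite[Section 3]{ps} but with the commutator ideal $\ab{[R,R]}$ and the hypothesis on $\bar R$ playing the role that ordinary $p$-torsion-freeness played in the commutative case.
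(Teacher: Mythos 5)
Your construction of the natural transformation $C\to F$ for an arbitrary pre-Witt functor $F$ is essentially the paper's: define $V^n_a\mapsto V^n\ab{a}_F$ on $\tilde G(A)$ for the free ring $A=\Z\{R\}$, kill $H(A)$ using properties \ref{properties}(1)--(2), kill the saturation $\tilde H(A)$ using that $F(A)$ is $p$-torsion free (property (4) applied to the free ring), pass to the $V$-completion, and then for general $R$ check that $G_I$ dies by functoriality and that $\Sigma_I$ dies because $F(S)$ is $p$-torsion free whenever $S,\bar S$ are. That is exactly Theorem~\ref{universalweakprewitt}, and your uniqueness remark (determination on the $V^n\ab{r}$ plus $V$-completeness) is the standard complement. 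One small imprecision: this does not make $C$ a universal \emph{weak} pre-Witt functor as you assert; the argument killing $\Sigma_I$ needs the full property (4) of the target, so the transformation is only constructed to pre-Witt functors.

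The genuine gap is the step you defer: showing that $C$ itself satisfies property \ref{properties}(4), which is precisely what the theorem requires beyond Theorem~\ref{universalweakprewitt}. Your proposed route --- analyzing the saturation $\tilde H(R)$ together with the quotient by $p$-power torsion in the definition of $G$ --- does not work as stated: torsion-freeness of $G(A)$ (equivalently (4$'$), Corollary~\ref{ceqg}) says nothing about the further quotient $C(R)=G(A)/\tilde G_I$, which can a priori acquire $p$-torsion, and no direct analysis along these lines is carried out in the paper. The paper's actual argument is different and is where Conjecture~\ref{zind} concretely enters: one proves $C(A)\cong X(A)$ for a free algebra $A$ (Proposition~\ref{C(A)isX(A)}) by showing the surjection $\tilde G(A)/\tilde H(A)\to \tilde X(A)$, $V^n_a\mapsto V^n\ab{a}$, is injective, and the injectivity is exactly the $\Z$-linear independence of Teichm\"uller elements asserted in Conjecture~\ref{zind}. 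Combining this with Lemma~\ref{EisX} one checks $\eta_A(\tilde G_I)=\tilde X_I$ and hence $C(R)\cong E(R)$ for all $R$; property (4) for $C$ is then inherited from $E$, for which it holds unconditionally because $\tilde X_I=X_I^{\sat}$ when $R$ and $\bar R$ are $p$-torsion free (Remark~\ref{rmk:e}(3), Theorem~\ref{e:pre-witt}). In particular the theorem is conditional on Conjecture~\ref{zind}, a dependence your proposal only gestures at without saying how the conjecture is used; without the identification $C\cong E$ (or some substitute proof of (4) for $C$), the universality claim is not established.
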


\noindent Unfortunately, it is not straightforward to verify that $C$ satisfies property~\ref{properties}(4) and hence is a pre-Witt functor. To overcome this difficulty, we introduce below the notion of a \emph{weak pre-Witt functor}, whose role is intended to be temporary. First, we show that $C$ is a weak pre-Witt functor.  We then prove that, for any pre-Witt functor $F$, there exists a natural transformation $C \to F$. Finally, we show that $C \cong E$, from which it follows that $C$ is a universal pre-Witt functor.
\begin{definition}[weak pre-Witt functor]\label{def:weakpre} A functor $F: \Rings \to  \mathsf{Ab}$ is said to be a weak pre-Witt functor if it satisfies the properties (1), (2) and (3) of \ref{properties} and the following property, which is a weaker version of  \ref{properties}(4): \\

$(4')$ $A$ is a free polynomial ring in $\Rings$ then $F(A)$ is $p$-torsion free
\end{definition} 
\vspace{1mm}
\noindent Any pre-Witt functor is a weak pre-Witt functor. In particular, the functor $E$ (see Definition \ref{def:e}), the Witt functor $W_H$ defined in \cite{h1}, \cite{h2} are weak pre-Witt functors. 
 
\begin{theorem}\label{universalweakprewitt} Let $p\neq 2$.  The functor $C: \Rings \to \Ab$ as defined in \ref{def:C}, is a weak pre-Witt functor.  Moreover, given any pre-Witt functor $F$, there exists a natural transformation $C \to F$. In particular, there is a natural transformation $C\xrightarrow{\eta} E$ which is compatible with $V, \ab{ \ }$.
\end{theorem}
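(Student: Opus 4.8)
The strategy is to treat the three assertions in turn, with the bulk of the work going into constructing the natural transformation $C \to F$ for an arbitrary pre-Witt functor $F$. First I would verify that $C$ is a weak pre-Witt functor. Properties \ref{properties}(1)--(3) are built into the construction: the relations $\ab{0}=0$ and $\ab{-r}=-\ab{r}$ (for $p\neq 2$) come from the generators $V^n_0 := 0$ and $V^n_r + V^n_{-r}$ in $H(R)$; additivity of $x\mapsto V\ab{x^p}-p\ab{x}$ comes from the first family of generators of $H(R)$; and $V$-completeness holds because $G(R)$ is defined as a completion and $\tilde{G_I}$, $\Sigma_I$ are $V$-stable closed subgroups, exactly as in the proof of Theorem~\ref{e:pre-witt}. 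For property $(4')$, when $A$ is a free polynomial ring the presentation $0\to 0\to A\xrightarrow{\mathrm{id}} A\to 0$ gives (by Corollary~\ref{ceqg}) $C(A)\cong G(A)$, and $G(A)$ is $p$-torsion free by construction, since we quotiented $G^0(A)$ by the closed subgroup generated by $p$-power torsion; one must check this quotient does not reintroduce torsion, which follows because the closed subgroup of $p$-power-torsion elements is saturated in the relevant sense.

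Next, given a pre-Witt functor $F$, I would construct $C\xrightarrow{\eta} F$. The natural place to start is a free polynomial ring $A = \Z\{R\}$. On $\tilde{G}(A)$, define a candidate map by $V^n_a \mapsto V^n\ab{a}_F \in F(A)$, where $\ab{\ }_F$ and $V$ are the structure maps of $F$; this is well-defined on the free abelian group $\tilde{G}(A)$. One then checks it kills $H(A)$: the additivity relations map to $0$ because $F$ satisfies \ref{properties}(2), and the relations $V^n_a+V^n_{-a}$ map to $0$ by \ref{properties}(1). It kills $\tilde{H}(A)$, the $p$-saturation, precisely because $F(A)$ is $p$-torsion free ($A$ is free, so this uses the weak property $(4')$ which $F$ satisfies as a pre-Witt functor). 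By \ref{properties}(3), $F(A)$ is $V$-complete, so the map extends over the completion $G^0(A)$, and it kills the closed $p$-power-torsion subgroup again by $p$-torsion-freeness of $F(A)$, descending to $G(A) = C(A)$. For a general ring $R$ with presentation $0\to I\to A\to R\to 0$, the composite $C(A)=G(A)\to F(A)\to F(R)$ sends $V^n_a - V^n_b$ (for $a-b\in I$) to $V^n\ab{a}_F - V^n\ab{b}_F$, which vanishes in $F(R)$ since $\ab{\ }_F$ is functorial and $a,b$ have the same image in $R$; so $G_I(A)$ dies. For $\Sigma_I$: an element comes from $\tilde\phi(\beta)$ with $\beta\in X_J^{\mathrm{sat}}$ for some $\phi\colon S\to R$ with $S,\bar S$ $p$-torsion free — here I would run the analogous $G$-construction and use that $F(S)$ is $p$-torsion free (now invoking the \emph{full} property \ref{properties}(4) for $F$, since $\bar S$ is $p$-torsion free) so that the saturation is absorbed. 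Hence the map descends to $\eta\colon C(R)\to F(R)$, and functoriality of the presentation $R\mapsto \Z\{R\}$ and naturality of $\ab{\ }_F, V$ give that $\eta$ is a natural transformation compatible with $V$ and $\ab{\ }$. Uniqueness (if needed later) follows because $C(R)$ is topologically generated by the $V^n\ab{r}$.

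Finally, applying this to $F = E$ — which is a pre-Witt functor by Theorem~\ref{e:pre-witt} — yields the natural transformation $C\xrightarrow{\eta} E$ compatible with $V$ and $\ab{\ }$, giving the last sentence of the statement.

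The main obstacle I anticipate is the treatment of $\Sigma_I$ and the $p$-saturations. The subtlety is that $\Sigma_I$ is defined using auxiliary rings $S$ with $S,\bar S$ $p$-torsion free and elements of $X_J^{\mathrm{sat}}$; to show the analogous $G$-theoretic elements map into (closed subgroups killed by) the composite into $F(R)$, one needs $F(S)$ to be $p$-torsion free, which is exactly where the modified property \ref{properties}(4) — requiring both $S$ \emph{and} $\bar S$ $p$-torsion free — is essential, and one must make sure the $G$-side construction of $\Sigma_I$ matches the $X$-side closely enough (via a comparison map $G(\Z\{S\})\to ?$) that the saturation argument goes through. Keeping careful track of which torsion-freeness hypothesis is available at each stage (the weak one $(4')$ on free rings, versus the full one \ref{properties}(4)) is the delicate bookkeeping in the argument. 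Showing $C\cong E$, deferred to later in the section, will then upgrade ``weak pre-Witt'' to ``pre-Witt'' and ``universal among pre-Witt functors''.
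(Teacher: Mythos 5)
Your proposal is correct and follows essentially the same route as the paper: verify properties (1)--(3) and $(4')$ from the construction (the latter via $C(A)\cong G(A)$ for free $A$), build $\eta$ on free rings by $V^n_a\mapsto V^n\ab{a}$ and pass through $H$, its $p$-saturation, and the completion using $V$-completeness and $p$-torsion-freeness of $F(A)$, then descend to general $R$ by killing $G_I$ and $\Sigma_I$, the latter using the full property (4) applied to $F(S)$ with $S,\bar S$ $p$-torsion free. If anything, you are slightly more explicit than the paper about why the saturation $\tilde H(A)$ and the $p$-power-torsion quotient in $G^0(A)\to G(A)$ are killed, which is a point the paper passes over quickly.
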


\begin{proof} It follows from the construction of $C$ that, $C$ satisfies properties (1), (2) and (3) of Definition \ref{properties}. The Corollary \ref{ceqg} implies that $C$ also satisfies the property $(4')$ of the Definition \ref{def:weakpre}. Hence $C$ is a weak pre-Witt functor. We will now prove that given any pre-Witt functor $F$ there exists a natural transformation $\eta: C \to F$. The idea is to first establish a group homomorphism in the case of a free algebra and then extend the result to any $R \in \Rings$. We will prove this in the following steps. \\

\noindent Step 1: We will show that, for a free non-commutative polynomial ring $A$ there exists a group homomorphism $C(A) \to F(A)$. Consider the presentation $$0 \to 0 \to A \xrightarrow{id} A \to 0$$ We define the homomorphism, 

$$\eta : \tilde{G}(A) \to F(A) \ \text{given by} \  V^n_{a}  \mapsto V^n\ab{a} \ \forall \  a \in A, \ n \geq 0.$$ 

\vspace{1mm}
\noindent Since $F$ satisfies the properties (1)-(2) of Definition \ref{def:weakpre}, $\eta(\tilde{H}(A))=0$ in $F(A)$. So, we get a homomorphism,

 $$\tilde{\eta}:\tilde{G}(A)/\tilde{H}(A) \to F(A).$$ 
 
\vspace{1mm}
\noindent As $\eta$ is compatible with $V$, taking $V$-completion, we get the induced group homomorphism, denoted as $\eta_A: G(A) \to F(A)$. \\

\noindent{Step 2:} To show that for $R \in \Rings$, there exists a group homomorphism $C(R) \to F(R)$.\\

\noindent Consider the presentation $0 \to I \to \Z\{R\} \stackrel{\pi_1}{\longrightarrow} R \to 0$ of $R$. By Step (1), we have a group homomorphism $F(\pi_1) \circ \eta_{\Z\{R\}} : G(\Z\{R\}) \to F(R)$. \\
\vspace{2mm}

\noindent It is easy to see that for $a, b \in \Z\{R\}$ such that $a-b \in I$, $G(\pi_1)(V^n_a-V^n_b)= V^n\ab{\pi_1(a)}-V^n\ab{\pi_1(b)} = 0$, thus $F(\pi_1)\circ\eta_{\Z\{R\}}(G_I) = 0$. Hence there exists a group homomorphism $\eta: G(\Z\{R\})/G_I \to F(R)$. It remains to show that $F(\pi_1) \circ \eta_{\Z\{R\}}(\tilde{G_I}) = 0$ \ \\

\noindent Let $S \in \Rings$ such that $S, \bar{S}$ are $p$-torsion free and $\phi: S \to R$ be a ring homomorphism. We now recall here the definition of $\Sigma$ and the diagram (1) as in the given in Definition \ref{def:e}. 
\[
\begin{tikzcd}[column sep=1.5em, row sep=1.5em]
0 \arrow[r] & J \arrow[d, dashed] \arrow[r] & \Z\{S\} \arrow[d, dashed,"\tilde{\phi}"] \arrow[r, "\pi_2"] & S \arrow[d, "\phi"] \arrow[r] & 0 \\
0 \arrow[r] & I \arrow[r] & \Z\{R\} \arrow[r, "\pi_1"] & R \arrow[r] & 0
\end{tikzcd}
\tag{1}
\] 
\noindent Consider the map $f: G(\Z\{S\}) \to G(\Z\{S\})/G_J$. Then 
$$\Sigma := \{ \alpha \in G(\Z\{R\}) \mid \alpha = G(\tilde{\phi})(\beta), \ \beta \ \text{is a p-torsion in \ }  G(\Z\{S\})/G_J \}$$

\noindent We have the following diagram 
\begin{center}
\begin{tikzcd}
&
G(\Z\{S \})
\ar{dl}[swap, sloped, near start]{\eta_{\Z\{S\}}}
\ar{rr}{f}
\ar[]{dd}[near start]{\tiny{G(\tilde{\phi})}}
& & G(\Z\{S\})/G_J 
\ar{dd}{}
\ar{dl}[swap, sloped, near start]{g}
\\
F(\Z\{S\}) 
\ar[crossing over]{rr}[swap, near start]{F(\pi_2)}
\ar{dd}[swap]{F(\tilde{\phi})}
& & F(S)
\ar[]{dd}[near start]{F(\phi)}
\\
&
G(\Z\{R\})
\ar[near start]{rr}{}
\ar[sloped]{dl}{\eta_{\Z\{R\}}}
& &  G(\Z\{R\})/G_I
\arrow[dl, densely dotted]
\\
F(\Z\{R\})
\ar[rr, "F(\pi_1)"]
& & F(R)
\ar[crossing over, leftarrow, near start]{uu}{}
\end{tikzcd}
\end{center}

\noindent Let $\alpha \in \Sigma$. Then $\alpha = G(\tilde{\phi})(\beta)$ for some $\beta \in G_J^{\sat}$. Thus $p^{\ell}\beta \in G_J$ for some $\ell \geq 0$. In other words $f(\beta)$ is a $p$-torsion element in $G(\Z\{S\})/G_J$. Since $\eta_{\Z\{S\}}(G_J) \subseteq F_J$, we have the induced group homomorphism denoted by $g: G(\Z\{S\})/G_J \to F(S)$, which takes $f(\beta)$ to a $p$-torsion element in $F(S)$. Since $F$ is a pre-Witt functor and $S, \bar{S}$ are $p$-torsion free,  $F(S)$ is $p$-torsion free. Thus $g \circ f(\beta) = 0$. Thus,
$$(F(\phi) \circ g \circ f)(\beta) = (F(\phi)\circ F(\pi_2)\circ \eta_{\Z\{S\}})(\beta) = 0$$
Since $F$ is functor, we have the commutativity of the front square and thus,
$$(F(\phi) \circ F(\pi_2) \circ \eta_{\Z\{S\}})(\beta) = (F(\pi_1) \circ F(\tilde{\phi}) \circ \eta_{\Z\{S\}})(\beta) = 0  $$

\noindent By step (1), the left outermost square commutes. Hence,
$$(F(\pi_1) \circ F(\tilde{\phi}) \circ \eta_{\Z\{S\}})(\beta) = (F(\pi_1) \circ \eta_{\Z\{R\}} \circ G(\tilde{\phi})(\beta) = 0 $$
By definition $\alpha = G(\tilde{\phi})(\beta)$, hence 
$$F(\pi_1) \circ \eta_{\Z\{R\}}(\beta) = 0 $$
Since $\eta_{\Z\{R\}}(G_I) \subseteq F_I$, the morphism $F(\pi_1) \circ \eta_{\Z\{R\}}$ already factors through $G(\Z\{R\})/G_I$. Thus we have a morphism 
$$C(R) : = G(\Z\{R\})/\langle G_I, \Sigma\rangle \to F(R)$$
Thus, in particular, since $E$ is a pre-Witt functor, there exists a natural transformation $C \to E$. 
\end{proof}

In the remaining section, we show that $C$ is in fact naturally isomorphic to $E$ subject to the Conjecture \ref{zind}. This implies that  $C$ is pre-Witt and Theorem \ref{universalweakprewitt} implies that $C$ is in fact a universal pre-Witt functor. Thus, $E$ is a universal pre-Witt functor. The first step towards proving $C \cong E$ would be to show that for a free non-commutative polynomial algebra $A$, $C(A) \cong X(A)$.  

\begin{proposition}\label{C(A)isX(A)} Let $p \neq 2$, and $A=\Z\ac{S}$ be a free ring over a set $S$. The Conjecture \ref{zind} implies that $C(A) \cong X(A)$. 
\end{proposition}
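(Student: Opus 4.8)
The plan is to construct mutually inverse group homomorphisms between $C(A)$ and $X(A)$ that are compatible with $V$ and $\langle\ \rangle$. By Corollary~\ref{ceqg}, $C(A)\cong G(A)$, so it suffices to show $G(A)\cong X(A)$. One direction is formal: since $X$ satisfies properties \ref{properties}(1)--(3) on the free ring $A$ — the Teichmüller identities, additivity of $x\mapsto V\langle x^p\rangle - p\langle x\rangle$, and $V$-completeness — and since $X(A)$ is $p$-torsion free (indeed $\tilde{X}_I = X(I)$ is trivial here, or directly because $A$ is a free, hence $p$-torsion free commutative-free ring and $X(A)\subseteq A^{\N_0}$ with $A$ torsion free), the universal property established inside the proof of Theorem~\ref{universalweakprewitt} (Step 1) gives a natural map $\eta_A\colon G(A)\to X(A)$ sending $V^n_a\mapsto V^n\langle a\rangle$. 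So the real content is producing an inverse $X(A)\to G(A)$.

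First I would define, on the dense subgroup $\tilde X(A)\subseteq X(A)$ generated by the $V^n\langle a\rangle$, a candidate map $\theta\colon \tilde X(A)\to G(A)$ by $V^n\langle a\rangle \mapsto V^n_a$. The key issue is well-definedness: one must check that every $\Z$-linear relation among the elements $\{V^n\langle a\rangle\}$ in $X(A)\subseteq A^{\N_0}$ is already forced by the defining relations of $G(A)$ (i.e. lies in the closure of $\tilde H(A)$ together with the completion). This is precisely where Conjecture~\ref{zind} enters. A relation $\sum_i c_i V^{n_i}\langle f_i\rangle = 0$ in $A^{\N_0}$ can, after applying $V$-adic bookkeeping and using the additivity relation to absorb the $V^n_{x^p}-pV^{n-1}_x$ terms and the sign relations $V^n_r + V^n_{-r}$, be reduced to a relation purely among distinct Teichmüller elements $\langle g_j\rangle$ with $g_j \neq \pm g_k$ for $j\neq k$; Conjecture~\ref{zind} says such a relation is trivial. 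Then I would check $\theta$ is continuous for the $V$-topologies and extend it by completeness to $\Theta\colon X(A)\to G(A)$, noting $G(A)$ is $V$-complete by construction (part (6) of Definition~\ref{def:C}) and that the $p$-power-torsion quotient is harmless since $G(A)$ — being the recipient of $\Theta$ whose composite with $\eta_A$ we want to be the identity on a $p$-torsion-free group — forces the torsion to vanish, or one argues the torsion subgroup of $\tilde G(A)/\tilde H(A)$ dies under $\theta$ directly.

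Finally I would verify $\Theta\circ\eta_A = \mathrm{id}_{G(A)}$ and $\eta_A\circ\Theta = \mathrm{id}_{X(A)}$. Both composites fix the generators $V^n_a$ (resp. $V^n\langle a\rangle$) by construction, are continuous, and agree on the respective dense subgroups, hence are the identity by completeness; compatibility with $V$ and $\langle\ \rangle$ is immediate on generators.

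The main obstacle is the well-definedness of $\theta$, i.e. showing that the relations cutting out $X(A)$ inside $A^{\N_0}$ are generated by the additivity relation, the sign relations, and (the crucial input) the $\Z$-linear independence of distinct non-antipodal Teichmüller elements — Conjecture~\ref{zind}. The bookkeeping reducing a general element of $\tilde X(A)$ written as $\sum V^{n_i}\langle a_{n_i}\rangle$ to a normal form, and checking that two normal forms equal in $A^{\N_0}$ are equal modulo $\tilde H(A)$, is the technical heart; everything else is a formal consequence of $V$-completeness and the universal property of $G$ already available from Theorem~\ref{universalweakprewitt}.
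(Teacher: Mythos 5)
Your proposal is correct and follows essentially the same route as the paper: the paper constructs the surjection $\tilde{G}(A)/\tilde{H}(A) \to \tilde{X}(A)$, $V^n_a \mapsto V^n\ab{a}$ (via Step 1 of Theorem~\ref{universalweakprewitt} with $F=E$ and Lemma~\ref{EisX}) and proves it injective by reducing to a normal form where Conjecture~\ref{zind} applies, then passes to $V$-completions. Your "well-definedness of the inverse" is literally the injectivity of this surjection, so the two arguments coincide, including the point at which the conjecture is invoked and the observation that the torsion quotient in the definition of $G(A)$ is vacuous here.
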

\begin{proof}
The proof of this lemma is similar to the Lemma 4.3 in \cite{ps}.
\vspace{2mm}
By using the Step(1) of the proof of the Theorem \ref{universalweakprewitt},  replacing $F$ by $E$, and using the Lemma \ref{EisX}, we get an explicit epimorphism, 
$$\tilde{\eta}:  \tilde{G}(A)/\tilde{H}(A) \to \tilde{X}(A);  \  \ \ V^n_a \mapsto V^n\ab{a} .$$
 
To show that $C(A) \cong X(A)$ it is enough to show that $\tilde{\eta}$ is an isomorphism. This is because, the map $\tilde{\eta}$ is $V$-compatible and after taking the $V$-completions, we will get that $C(A) \cong X(A)$. Thus, it is enough to show that this map is injective. The injectivity is proved by using the arguments as in the step(2) of the proof of Lemma 4.3 in \cite{ps}. Note that Lemma 4.2 in \cite{ps} will be replaced by the Conjecture \ref{zind}. Hence we get an isomorphism denoted as $\eta_{A}: C(A) \to X(A)$
\end{proof}

\begin{proof}[Proof of Theorem \ref{main2}] To show that $E$ is universal pre-Witt functor, it suffices to show that $E(R)$ is naturally isomorphic to $C(R)$ for any $R \in \Rings$. Consider the presentation, $0 \to I \to A:=\Z\{R\} \to R \to 0$. By following the proof of Theorem \ref{universalweakprewitt} with $F$ replaced by $E$, we get a morphism $C(R) \to E(R)$. By using the Lemma \ref{EisX} and Corollary \ref{ceqg}, we have the following commutative diagram,
\[
\begin{tikzcd}
  G(A) \cong C(A) \arrow[r, two heads]  \arrow[d, "\eta_{A}"] &  G(A)/\tilde{G_I} \arrow[r] \arrow[d, densely dotted] & 0 \\
   X(A) \arrow[r, two heads] &  X(A)/\tilde{X_I}  \arrow[r]              & 0
\end{tikzcd}
\]
Since $\eta_{A}$ is an isomorphism, it is easy to check that $\eta_{A}(\tilde{G_I}) = \tilde{X_I}$. This implies that $\eta_{A}$ induces an isomorphism $\eta_R: C(R) \to E(R)$. 
\end{proof}

\section{Witt functors}

\noindent The goal of this section is to define the notion of a Witt functor (see Definition \ref{witt}) and describe the Witt functor $\hat{E}$. We will prove the universality of $\hat{E}$ assuming Conjecture \ref{zind}.  We also finish the proof of Theorem \ref{main} by showing the existence of a natural transformation $E \to W_H$. Note that such a transformation exists by universality of $E$ (Theorem \ref{main2}), however we would like to establish this (see Proposition \ref{ehattowh})  without assuming Conjecture \ref{zind}. \\

\noindent In the classical case, for $R \in \Com$, the set $W(R)$ is in bijection with $R^{\N_0}$. This bijection can be explicitly given by 
$$ (a_0, a_1, ...) \mapsto \sum_n V^n\ab{a_n}$$
Let $F: \Rings \to \Ab$ be a pre-Witt functor. For a tuple $(a_0, a_1,...) \in R^{\N_0}$, denote $$(a_0, a_1,...)_w  := \sum_n V^n\ab{a_n} \in F(R).$$

\subsection{Witt sum and difference polynomials :} Before defining the Witt functor, we first recall the group structure on $W_H(R)$ for $R \in Rings$ as given in \cite[1.5, 1.6]{h1}. Note that $W_H(R)$ is equipped with the Verschiebung operator  
$$V: W(R) \to W(R) \ \ \ \  V (a_0,a_1,\cdots):= (0,a_0,a_1,\cdots) $$ 
and the Teichm\"uller map 
$$ \langle \ \rangle : R \longrightarrow W(R), \ \ \ \  \langle a\rangle := ( a,0,0,\ldots)$$

\begin{remark}
 The definitions $V: W_H(R) \to W_H(R)$ and $\langle \ \rangle : R \longrightarrow W_H(R)$ differ from that of $V: E(R) \to E(R)$ and $\langle \ \rangle : R \longrightarrow E(R)$ (see Definition \ref{def:ncxa} and Remark \ref{rmk:e}). Every element of $W_H(R)$ represented (not necessarily uniquely) as $(a_0,a_1...)_w$. This is a consequence of the fact that the addition and subtraction of elements of the form $(a_0,a_1...)_w$ in the group $W_H(R)$ are given by non-commutative version of Witt polynomials as given below. 
\end{remark}
\begin{definition}[Witt sum and difference polynomials]\label{rnen}
For $R \in \Rings$, and elements \\
$(a_0,a_1,...)_w, (b_0, b_1, ...)_w \in W_H(R)$,
$$ (a_0,a_1,...)_w +  (b_0, b_1, ...)_w  := (s_0(a_0,b_0), s_1(a_0,b_0,a_1,b_1),...)$$
$$ (a_0,a_1,...)_w -  (b_0, b_1, ...)_w  := (d_0(a_0,b_0), d_1(a_0,b_0,a_1,b_1),...)$$
where $s_i$ and $d_i$ are non-commutative analogues of classical Witt sum and difference polynomials as defined in \cite[1.4.1]{h1}. We use the following two variable specialisations of the Witt sum and difference polynomials. For an integer $i\geq 0$
define polynomials $r_i, e_i\in \Z\ac{X,Y}$ by 
\begin{align*}
 r_i(X,Y) & := s_i(X,Y,0,0,...0,0)\\
 e_i(X,Y) & := d_i(X,Y,0,0,...0,0)
 \end{align*}
It follows that $r_0(X,Y) = X+Y$ and $e_0(X,Y)=X-Y$. In particular, we have the following relations in $W_H$ (and also in the classical case)
\begin{align*}
 \ab{x} + \ab{y} & = \ab{x+y} + \sum_{i>0} V^i \ab{r_i(x,y)} \\
 \ab{x} - \ab{y} & = \ab{x-y} + \sum_{i>0} V^i \ab{e_i(x,y)}
\end{align*}
\end{definition}
\begin{definition}[Witt functor]\label{witt}
A pre-Witt functor $F : \Rings \to \Ab$ is called a Witt functor if,
\begin{align*}
 \ab{x} + \ab{y} & = \ab{x+y} + \sum_{i>0} V^i \ab{r_i(x,y)} \\
 \ab{x} - \ab{y} & = \ab{x-y} + \sum_{i>0} V^i \ab{e_i(x,y)}
\end{align*}
\end{definition}

\begin{examples} $W_H$ is a Witt functor.
\end{examples} 

\begin{definition}[The Witt functor $\hat{E}$]\label{defn:ehat}
For a ring $R \in \Rings$, let $\sR(R) \subset E(R)$ denote the smallest closed subgroup stable under $V$  and containing all  
elements of the form 
$$ \{\ab{x}+\ab{y} - \sum_{n\geq 0} V^n \ab{r_n(x,y)} \ | \ x, y \in R\} \ \Union \ \{\ab{x}-\ab{y} - \sum_{n\geq 0} V^n \ab{e_n(x,y)} \ | \ x,y \in R\}$$
where $r_n,e_n$ are integer polynomials defined in \eqref{rnen}. Let $\hat{E}(R):= \frac{E(R)}{\sR(R)}$. Clearly $\hat{E}$ is a Witt functor, since the required relations have been forced by the definition. 
\end{definition}
\noindent The following result is an immediate consequence of the above definition of $\hat{E}$.

\begin{lemma}\label{everyeleofEhat}
Every element of $\hat{E}(R)$ is of the form
\[
  \sum_{n=0}^{\infty} V^{n}\ab{a_n}.
\]
\end{lemma}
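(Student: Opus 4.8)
The plan is to show that the quotient map $E(R)\to\hat E(R)$ carries the ``standard'' generators of $E(R)$ onto a generating set of $\hat E(R)$ and then to use the relations defining $\sR(R)$ to absorb the Teichm\"uller terms at level $0$ that are ``out of place''. Recall from the properties of $X(R)$ listed in Section~\ref{section:e} that every element of $X(\Z\{R\})$, and hence of its quotient $E(R)$, can be written as a convergent sum $\sum_i V^{n_i}\ab{a_{n_i}}$ with only finitely many $n_i$ below any given bound; passing to $\hat E(R)=E(R)/\sR(R)$, every element of $\hat E(R)$ is therefore represented by such a sum. So the content of the lemma is that such a sum can be \emph{reorganised}, modulo $\sR(R)$, into the canonical shape $\sum_{n\ge 0}V^n\ab{a_n}$ with exactly one Teichm\"uller symbol at each level $n$.

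First I would reduce to a statement about finite partial sums. Given $\xi\in\hat E(R)$, lift it to $\sum_i V^{n_i}\ab{a_{n_i}}\in E(R)$; by $V$-completeness (property~\ref{properties}(3), inherited by $\hat E$) it suffices to rewrite, for each $N$, the finite sub-sum of terms with $n_i<N$ into the form $\sum_{n<N}V^n\ab{a_n}$ modulo $V^N\hat E(R)$, compatibly as $N$ grows. Then the core step is the level-by-level merging: if at some level $n$ we have two Teichm\"uller symbols $V^n\ab{x}+V^n\ab{y}$, apply $V^n$ to the first relation in Definition~\ref{defn:ehat},
\[
\ab{x}+\ab{y} \;=\; \ab{x+y}+\sum_{k>0}V^k\ab{r_k(x,y)},
\]
which holds in $\hat E(R)$ since the element $\ab{x}+\ab{y}-\sum_{k\ge 0}V^k\ab{r_k(x,y)}$ lies in $\sR(R)$ (here $r_0(x,y)=x+y$). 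This replaces the two symbols at level $n$ by a single symbol $V^n\ab{x+y}$ at level $n$ plus correction symbols $V^{n+k}\ab{r_k(x,y)}$, $k>0$, at strictly higher levels. Since within the truncation only finitely many symbols sit at levels $<N$, iterating this merge at levels $0,1,2,\dots$ terminates after finitely many steps at each level and, because each merge only pushes mass upward, the process is consistent with increasing $N$; in the limit one obtains a representative $\sum_{n\ge0}V^n\ab{a_n}$ with one symbol per level. (If at some level a symbol is absent, one inserts $\ab{0}=0$, using property~\ref{properties}(1).)

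The one point that needs a little care — and which I expect to be the main obstacle — is the bookkeeping showing that the infinitely many higher-level corrections generated during the merges do not accumulate badly: one must check that, for each fixed level $n$, only finitely many correction symbols are ever produced there, so that they too can be merged in finitely many steps. This follows from the fact that the original lift has only finitely many symbols below any bound and that a merge at level $m$ feeds into levels $>m$ only; a clean way to organise this is to do all merges at level $0$ first (finitely many), then all merges at level $1$ (now finitely many, since level $1$ has received only finitely many new symbols), and so on, taking the $V$-adic limit of the resulting representatives. Everything else — that $\sR(R)$ is $V$-stable and closed, so that the rewriting is legitimate modulo $\sR(R)$ and passes to the completion — is immediate from Definition~\ref{defn:ehat}.
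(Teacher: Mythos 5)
Your argument is correct and is essentially the paper's own reasoning: the paper states this lemma as an immediate consequence of Definition~\ref{defn:ehat}, which amounts precisely to your route of representing a class by a convergent sum $\sum_i V^{n_i}\ab{a_{n_i}}$ (property (3) of $X(\Z\{R\})$) and merging symbols level by level via the forced relation $\ab{x}+\ab{y}=\ab{x+y}+\sum_{k>0}V^k\ab{r_k(x,y)}$, with the corrections pushed to higher $V$-levels and absorbed using $V$-stability and closedness of $\sR(R)$ together with $V$-completeness. Your careful bookkeeping (processing level $0$, then level $1$, etc., and passing to the $V$-adic limit) just makes explicit what the paper leaves implicit.
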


\noindent Moreover, we have the following conjectural characterisation of the functor $\hat{E}$.

\begin{theorem}
For $p \neq 2$, the Conjecture \ref{zind} implies that $\hat{E}$ is a universal Witt functor. 
\end{theorem}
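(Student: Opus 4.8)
The plan is to deduce the universality of $\hat E$ among Witt functors from the universality of $E$ among pre-Witt functors, i.e. from Theorem~\ref{main2} (this is the only place where Conjecture~\ref{zind} is needed). Since $\hat E$ is already known to be a Witt functor by Definition~\ref{defn:ehat}, what must be shown is that for every Witt functor $F$ there is a \emph{unique} natural transformation $\hat E\to F$ compatible with $V$ and $\ab{\,\cdot\,}$.

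First I would fix a Witt functor $F$. It is in particular a pre-Witt functor, so Theorem~\ref{main2} supplies a unique $V$- and $\ab{\,\cdot\,}$-compatible natural transformation $\theta=\theta_F:E\to F$. Next I would check that, for each $R\in\Rings$, the component $\theta_R:E(R)\to F(R)$ annihilates $\sR(R)$. Because $F$ is a Witt functor, Definition~\ref{witt} (together with $r_0(X,Y)=X+Y$ and $e_0(X,Y)=X-Y$) gives
$$\ab{x}+\ab{y}-\sum_{n\geq 0}V^n\ab{r_n(x,y)}=0\quad\text{and}\quad\ab{x}-\ab{y}-\sum_{n\geq 0}V^n\ab{e_n(x,y)}=0\quad\text{in }F(R).$$
Since $\theta_R$ is additive and intertwines $V$ and $\ab{\,\cdot\,}$, it kills every generator of $\sR(R)$ from Definition~\ref{defn:ehat} and every $V$-translate of such a generator, hence the $V$-stable subgroup they generate; and as $\theta_R$ maps $V^mE(R)$ into $V^mF(R)$ while $F(R)$ is $V$-complete (hence Hausdorff), $\theta_R$ is continuous and therefore also kills the closure of that subgroup, which is exactly $\sR(R)$. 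So $\theta_R$ descends to $\hat\theta_R:\hat E(R)=E(R)/\sR(R)\to F(R)$; naturality of $\hat\theta$ follows from that of $\theta$ and the functoriality of $R\mapsto\sR(R)$, and $\hat\theta$ stays compatible with $V$ and $\ab{\,\cdot\,}$.

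For uniqueness I would note that the quotient map $E\surj\hat E$ is itself a $V$- and $\ab{\,\cdot\,}$-compatible natural transformation, so any $V$- and $\ab{\,\cdot\,}$-compatible $\mu:\hat E\to F$ precomposes with it to give such a transformation $E\to F$, which must equal $\theta_F$ by the uniqueness built into Theorem~\ref{main2}; since $E\surj\hat E$ is epi this forces $\mu=\hat\theta_F$. (Alternatively, Lemma~\ref{everyeleofEhat} writes every element of $\hat E(R)$ as $\sum_nV^n\ab{a_n}$, and on such elements $\mu_R$ is pinned down by $V$-, $\ab{\,\cdot\,}$-compatibility together with $V$-completeness.) Specialising the whole argument to the Witt functor $F=W_H$ then yields the stated ``in particular'': a unique natural transformation $\hat E\to W_H$.

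I expect the only real subtlety to be the passage from the abstractly generated $V$-stable subgroup to its closure $\sR(R)$, which is handled by the standard fact that a $V$-equivariant homomorphism between $V$-complete Hausdorff groups respects the $V$-filtration and is hence continuous; everything substantive — the existence and uniqueness of $\theta_F:E\to F$ — is imported from Theorem~\ref{main2}, and with it the dependence on Conjecture~\ref{zind}.
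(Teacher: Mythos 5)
Your proposal is correct and follows essentially the same route as the paper: the paper's proof likewise invokes Theorem~\ref{main2} (the only input from Conjecture~\ref{zind}) and observes that $\hat{E}$ is just $E$ modulo the relations forced on any Witt functor, so the unique transformation $E\to F$ kills $\sR(R)$ and descends. You merely spell out the details the paper leaves as ``one can easily deduce'' — continuity of $\theta_R$ via $V$-compatibility and $V$-completeness to handle the closure, and uniqueness via surjectivity of $E\surj\hat{E}$ — which is a sound elaboration, not a different argument.
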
 
\begin{proof}
By using Conjecture \ref{zind}, we have proved that $E$ is a universal pre-Witt functor (see Theorem \ref{main2}).  $\hat{E}$ is obtained by simply going modulo the extra relations in $E$ required to be a Witt functor. From this, one can easily deduce that $\hat{E}$ is a universal Witt functor.
\end{proof}

\noindent The goal of the remaining section is to prove Theorem \ref{main}(2). We will prove the following result without using Conjecture \ref{zind} and Theorem \ref{main2}. 
 
\begin{proposition}\label{ehattowh}
There is a unique natural transformation of pre-Witt functors $E \to W_H$ and of Witt functors $\hat{E} \to W_H$. 
\end{proposition}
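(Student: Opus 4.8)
The plan is to construct the natural transformation $E \to W_H$ directly on free algebras, where $E$ is computable via Lemma~\ref{EisX}, and then descend to arbitrary $R \in \Rings$ using the universal-type argument that underlies the proof of Lemma~\ref{ind:e} and Theorem~\ref{universalweakprewitt}. For a free non-commutative polynomial ring $A = \Z\{S\}$, we have $E(A) \cong X(A)$, and every element of $X(A)$ can be written as $\sum_i V^{n_i}\ab{a_{n_i}}$ with only finitely many terms of any bounded level (Property~(3) of $X$). Since $W_H(A)$ carries a Verschiebung $V$ and a Teichm\"uller map $\ab{\ }$ satisfying the same formal relations, I would first define $\tau_A : \tilde{X}(A) \to W_H(A)$ on generators by $V^n\ab{a} \mapsto V^n\ab{a}$, check it respects the only relations imposed in $\tilde{X}(A)$ (namely those forced by the group structure of $R^{\N_0}$, which are trivially respected), and then verify $V$-continuity so it extends to the closure $X(A)$: here one uses that $W_H(A)$ is $V$-complete (Property~(3) of a pre-Witt functor, which $W_H$ satisfies) and that $\tau_A$ is $V$-equivariant, so Cauchy sequences in the $V$-topology map to Cauchy sequences.

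Next I would descend to a general $R$. Fix the canonical free presentation $0 \to I \to A := \Z\{R\} \to R \to 0$. By the previous step we have $\tau_A : X(A) \to W_H(A)$, and composing with $W_H(\pi_1) : W_H(A) \to W_H(R)$ gives a homomorphism $X(A) \to W_H(R)$. I must show this kills $\tilde{X_I}$. For $X_I$: if $a - b \in I$ then $\pi_1(a) = \pi_1(b)$ in $R$, so $W_H(\pi_1)(V^n\ab{a} - V^n\ab{b}) = V^n\ab{\pi_1(a)} - V^n\ab{\pi_1(b)} = 0$. For $\Sigma_I$: given $\phi : S \to R$ with $S, \bar{S}$ $p$-torsion free, and $\alpha = \tilde{\phi}(\beta)$ with $\beta \in X_J^{\sat}$, so $p^\ell \beta \in X_J$. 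Then $p^\ell\, W_H(\pi_1)(\tau_A(\alpha)) = W_H(\pi_1)\circ W_H(\tilde\phi)\,\tau_{\Z\{S\}}(p^\ell\beta)$, and since $p^\ell\beta \in X_J$ this lies in (the image of) $W_H(\pi_2)$ applied to the subgroup generated by differences $V^n\ab{a}-V^n\ab{b}$ with $a-b\in J$, hence is $0$ in $W_H(S)$ and therefore $p^\ell\, W_H(\pi_1)(\tau_A(\alpha)) = 0$ in $W_H(R)$ via the functoriality square $W_H(\pi_1)\circ W_H(\tilde\phi) = W_H(\phi)\circ W_H(\pi_2)$. Now $W_H(R)$ is $p$-torsion free because $R$ and $\bar R$ are... wait — $R$ need not be $p$-torsion free. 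This is exactly the gap, and the fix is that we only need the conclusion in $W_H(S)$: $W_H(\pi_2)(\tau_{\Z\{S\}}(p^\ell\beta)) = 0$ already because $p^\ell\beta \in X_J$ and $\tau$ sends $X_J$-generators to $0$ in $W_H(S)$; hence $W_H(\phi)\circ W_H(\pi_2)(\tau_{\Z\{S\}}(\beta))$ is a $p^\ell$-torsion element of $W_H(S)$ which, since $S,\bar S$ are $p$-torsion free and $W_H$ is a pre-Witt functor, vanishes, and then push forward by $W_H(\phi)$. So $\tilde{X_I} \subseteq \ker$, giving the transformation $E(R) \to W_H(R)$. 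Independence of presentation and naturality follow from Lemma~\ref{ind:e} together with functoriality of everything in sight.

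Uniqueness is the cleanest part: any natural transformation $\theta : E \to W_H$ of pre-Witt functors must commute with $V$ and with $\ab{\ }$, so it is determined on all elements $\sum_n V^n\ab{a_n}$; by Property~(3) of $X$ such elements are dense in $E(A)$ for free $A$, and $\theta$ is continuous for the $V$-topologies (it is $V$-equivariant and both sides are $V$-complete), so $\theta$ is determined on $E(A)$, and since $E(A) \to E(R)$ is surjective for the canonical presentation, $\theta$ is determined on all $E(R)$. Finally, for the Witt-functor statement: $\sR(R) \subseteq \ker(E(R) \to W_H(R))$ because the defining generators of $\sR(R)$ are precisely the relations $\ab{x}+\ab{y} - \sum_n V^n\ab{r_n(x,y)}$ and $\ab{x}-\ab{y} - \sum_n V^n\ab{e_n(x,y)}$, which hold identically in $W_H$ by Definition~\ref{rnen}; hence the map factors through $\hat E(R) = E(R)/\sR(R)$, and uniqueness is inherited. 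The main obstacle is the one flagged above — the $p$-torsion issue when passing to $W_H(R)$ for $R$ not $p$-torsion free — which is resolved by doing the vanishing argument one level up, inside $W_H(S)$, before applying $W_H(\phi)$; this is exactly the mechanism already used in Step~2 of the proof of Theorem~\ref{universalweakprewitt}.
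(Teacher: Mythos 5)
Your overall route is the same as the paper's: build the map on free algebras where $E(A)\cong X(A)$ (Lemma~\ref{EisX}), descend to general $R$ by killing $X_I$ directly and handling $\Sigma_I$ by running the $p$-torsion argument inside $W_H(S)$ (where $S,\bar S$ $p$-torsion free and $W_H$ pre-Witt give $p$-torsion freeness) before pushing forward along $W_H(\phi)$ --- this is exactly the paper's appeal to Step~2 of Theorem~\ref{universalweakprewitt} with $C,F$ replaced by $E,W_H$ --- and then observe that the generators of $\sR(R)$ die in $W_H(R)$ so the map factors through $\hat E$. Your uniqueness argument (determination on the $V$-dense subgroup generated by $V^n\ab{a}$, plus $V$-equivariance, completeness, and surjectivity of $E(\Z\{R\})\to E(R)$) is also the natural one.

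However, there is a genuine gap at the very first step, the definition of $\tau_A:\tilde X(A)\to W_H(A)$ by $V^n\ab{a}\mapsto V^n\ab{a}$. You dismiss well-definedness by saying the only relations in $\tilde X(A)$ are ``those forced by the group structure of $R^{\N_0}$, which are trivially respected.'' They are not trivially respected: $\tilde X(A)$ is a subgroup of the product group $A^{\N_0}$, so the relations among the generators are arbitrary $\Z$-linear relations among the explicit sequences $V^n\ab{a}=p^n(0,\dots,0,a,a^p,\dots)$, whereas $W_H(A)$ is \emph{not} a subgroup of $A^{\N_0}$; its elements are equivalence classes and its addition is given by the non-commutative Witt polynomials. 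There is no a priori reason that a $\Z$-linear combination of the elements $V^n\ab{a}$ which vanishes coordinatewise in $A^{\N_0}$ also vanishes in $W_H(A)$. This is precisely what the paper's Lemma~\ref{imageehat} supplies: one compares the composite $X(A)\hookrightarrow A^{\N_0}\to \bigl(\tfrac{A}{[A,A]}\bigr)^{\N_0}$ with the ghost map $\omega:W_H(A)\to \bigl(\tfrac{A}{[A,A]}\bigr)^{\N_0}$, which is injective for free $A$ (since $A/[A,A]$ is $p$-torsion free, by \cite[page 2]{h2}), checks that both have the same values on $\sum_n V^n\ab{a_n}$, and thereby obtains a well-defined surjection $X(A)\to W_H(A)$ as $\omega^{-1}\circ\eta$ on the image. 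Your argument needs this ghost-map step (or an equivalent well-definedness argument); once it is inserted, the rest of your proposal goes through and coincides with the paper's proof.
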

We will need the following results in order to the prove Proposition \ref{ehattowh}.
\begin{lemma}\label{ehatrelation}
Let $A=\Z\ac{S}$ be a free algebra. Then there is a natural isomorphism $\hat{E}(A) \cong \frac{X(A)}{R(A)}$ where
$R(A)\subset X(A)$ is the smallest $V$-stable closed subgroup containing $$ \{\ab{x}+\ab{y} - \sum_{n \geq 0} V^n \ab{r_n(x,y)} \ | \ x, y \in A\} \union \{\ab{x}-\ab{y} - \sum_{n \geq 0}V^n \ab{e_n(x,y)} \ | \ x, y \in A\}$$
\end{lemma}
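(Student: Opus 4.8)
The plan is to show that the quotient of $X(A)$ by $R(A)$ coincides with the quotient of $E(A)$ by the image of $\sR(A)$, and then observe that for a free algebra $A$ the pre-Witt functor $E$ is already identified with $X$ by Lemma~\ref{EisX}. First I would recall that Lemma~\ref{EisX} gives a canonical isomorphism $E(A)\cong X(A)$ which is compatible with $V$ and with the Teichm\"uller maps $\ab{\ }$ (this compatibility is built into the construction in Definition~\ref{def:e} and Remark~\ref{rmk:e}(1)). Under this identification, the generating set for $\sR(A)$ in Definition~\ref{defn:ehat}, namely
\[
\{\ab{x}+\ab{y} - \textstyle\sum_{n\geq 0} V^n \ab{r_n(x,y)} \mid x, y \in A\} \,\cup\, \{\ab{x}-\ab{y} - \textstyle\sum_{n\geq 0} V^n \ab{e_n(x,y)} \mid x,y \in A\},
\]
maps exactly onto the generating set defining $R(A)\subset X(A)$. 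Since an isomorphism of topological groups carries the smallest $V$-stable closed subgroup containing a set to the smallest $V$-stable closed subgroup containing the image of that set, we get $E(A)\supset \sR(A) \xrightarrow{\ \sim\ } R(A)\subset X(A)$, and hence an induced isomorphism on quotients $\hat{E}(A) = E(A)/\sR(A) \cong X(A)/R(A)$.

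The main things to spell out carefully are: (i) that the isomorphism of Lemma~\ref{EisX} really does intertwine $\ab{\ }_{E}$ with $\ab{\ }_{X}$ and $V$ with $V$ — this follows because Lemma~\ref{EisX} is proved by taking the trivial presentation $0\to 0\to A\xrightarrow{\mathrm{id}} A\to 0$, for which $\tilde{X}_0 = 0$ and so $E(A)=X(A)/0 = X(A)$ with all structure maps the identity; and (ii) that both $\sR(A)$ and $R(A)$ are genuinely \emph{closed} $V$-stable subgroups, so the "smallest such containing a set" is well-defined — this is immediate since arbitrary intersections of closed $V$-stable subgroups are closed and $V$-stable. The naturality statement — that the isomorphism $\hat{E}(A)\cong X(A)/R(A)$ is compatible with morphisms of free algebras $A\to A'$ — follows from functoriality of both $E$ and $X$ together with the observation that the defining generators of $R(A)$ are sent to those of $R(A')$ under any ring homomorphism (because $r_n, e_n$ are fixed integer polynomials and $\ab{\ }$, $V$ are natural).

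I do not expect a serious obstacle here; the statement is essentially a bookkeeping consequence of Lemma~\ref{EisX}. The one point requiring a little care is making sure that passing to quotients is legitimate, i.e.\ that $\sR(A)$ is contained in $E(A)$ in a way that matches $R(A)\subset X(A)$ under the identification, and that no collapsing happens beyond what $R(A)$ already imposes. Both are handled by the remark that $\tilde{X}_0=0$ for the trivial presentation, so $E(A)$ and $X(A)$ are literally the same group with the same $V$ and the same $\ab{\ }$, and the two subgroups are then defined by identical formulas.
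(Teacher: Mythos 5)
Your proposal is correct and follows exactly the paper's argument: identify $E(A)\cong X(A)$ via Lemma~\ref{EisX}, note that this identification is compatible with $V$ and $\ab{\ }$, and conclude that $\sR(A)$ is carried onto $R(A)$, hence the quotients agree. The paper's proof is a two-line version of the same reasoning; your extra remarks on closedness, well-definedness of the smallest $V$-stable closed subgroup, and naturality are sound elaborations rather than a different route.
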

\begin{proof}
 By Corollary \ref{EisX}, we know that $E(A) \cong X(A)$. Since this isomorphism is compatible with $V$ and $\ab{ \ }$, $\sR(A)$ is mapped to $R(A)$
\end{proof}
\noindent We now observe the following connection between  $\hat{E}(A)$ and $W_H(A)$.
 
\begin{lemma}\label{imageehat} 
Let $A=\Z\ac{S}$ be a free algebra. Then the image of the natural map $\hat{E}(A) \to (\frac{A}{[A,A]})^{\N_0}$ is canonically isomorphic to $W_H(A)$. This gives a canonical surjection $\hat{E}(A) \to W_H(A)$ sending $V^n\ab{a} \mapsto V^n\ab{a}$. 
\end{lemma}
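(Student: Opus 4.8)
The plan is to realise both $\hat{E}(A)$ and $W_H(A)$ through their ghost components inside $(A/[A,A])^{\N_0}$, and then to observe that the two resulting images coincide. By Lemma~\ref{ehatrelation} we may write $\hat{E}(A)\cong X(A)/R(A)$, and by construction $X(A)$ is a subgroup of $A^{\N_0}$. Composing the inclusion $X(A)\hookrightarrow A^{\N_0}$ with the componentwise projection $A^{\N_0}\to (A/[A,A])^{\N_0}$ yields a continuous, $V$-equivariant homomorphism $w\colon X(A)\to (A/[A,A])^{\N_0}$; concretely, the $m$-th coordinate of $w\bigl(\sum_{n}V^{n}\ab{a_n}\bigr)$ is the class in $A/[A,A]$ of the classical ghost component $w_m(a_0,\dots,a_m)=\sum_{n\le m}p^{n}a_n^{p^{m-n}}$.

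The first point is that $w$ annihilates $R(A)$, so it factors through $\hat{E}(A)$. Since $R(A)$ is the smallest $V$-stable closed subgroup of $X(A)$ containing the elements $\ab{x}+\ab{y}-\sum_{n\ge 0}V^{n}\ab{r_n(x,y)}$ and $\ab{x}-\ab{y}-\sum_{n\ge 0}V^{n}\ab{e_n(x,y)}$ for $x,y\in A$, and since $\ker w$ is visibly $V$-stable and closed, it suffices to evaluate $w$ on these elements. For the first family this reduces to $x^{p^{m}}+y^{p^{m}}\equiv\sum_{n\le m}p^{n}r_n(x,y)^{p^{m-n}}\pmod{[A,A]}$ and for the second to $x^{p^{m}}-y^{p^{m}}\equiv\sum_{n\le m}p^{n}e_n(x,y)^{p^{m-n}}\pmod{[A,A]}$; both follow from the defining mod-commutator ghost relations for Hesselholt's Witt sum and difference polynomials $s_n,d_n$ (see \cite[1.4.1]{h1}) by specialising the higher variables to zero, and therefore hold in every associative ring. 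Thus $w$ descends to a $V$-equivariant homomorphism $\overline{w}\colon\hat{E}(A)\to (A/[A,A])^{\N_0}$ into the target with componentwise addition, and by Lemma~\ref{everyeleofEhat} its image equals $\{(\overline{w_m(a_0,\dots,a_m)})_{m\ge 0}\mid (a_n)_n\in A^{\N_0}\}$, that is, the image of the ghost map $A^{\N_0}\to (A/[A,A])^{\N_0}$.

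It remains to identify this image with $W_H(A)$. Here I would invoke the description of $W_H$ in \cite{h1,h2}: there is a natural surjection $A^{\N_0}\twoheadrightarrow W_H(A)$, $(a_n)_n\mapsto\sum_n V^{n}\ab{a_n}$ (for the $V$ and $\ab{\ }$ of Definition~\ref{rnen}), together with a ghost map $W_H(A)\to (A/[A,A])^{\N_0}$ whose precomposition with this surjection is again $(a_n)_n\mapsto(\overline{w_m(a_0,\dots,a_m)})_m$; and, crucially, for a free ring $A=\Z\{S\}$, where $A$ is $p$-torsion free and $A/[A,A]$ is a free $\Z$-module, this ghost map on $W_H(A)$ is injective. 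Hence $W_H(A)$ is canonically identified, via its ghost map, with the image of $A^{\N_0}\to (A/[A,A])^{\N_0}$, which is exactly $\operatorname{Im}\overline{w}$. Since $w$ is a homomorphism of abelian groups for the componentwise addition, so is $\overline{w}$; therefore the resulting bijection $\operatorname{Im}\overline{w}\cong W_H(A)$ is a group isomorphism, and composing gives the asserted surjection $\hat{E}(A)\twoheadrightarrow W_H(A)$. Comparing ghost components shows this map sends $V^{n}\ab{a}\in\hat{E}(A)$ to $V^{n}\ab{a}\in W_H(A)$, and naturality in $A$ is immediate since all maps in sight are componentwise or functorial.

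The step I expect to be the main obstacle is the last one, namely importing from \cite{h1,h2} the precise description of $W_H(A)$ for free $A$. One must pin down Hesselholt's conventions (indexing of ghost components, the pro-structure on $W_H$, and the $\prod_{n\ge 0}$ versus $\prod_{n\ge 1}$ bookkeeping), verify that his canonical surjection $A^{\N_0}\to W_H(A)$ really is the map $(a_n)_n\mapsto\sum_n V^{n}\ab{a_n}$ in our $V$ and $\ab{\ }$, and extract the injectivity of the ghost map in the torsion-free (free) case from his integrality results. Everything on the $\hat{E}$ side is formal given Lemmas~\ref{ehatrelation} and \ref{everyeleofEhat}; the mathematical content lies entirely in matching it cleanly onto Hesselholt's model.
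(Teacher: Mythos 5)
Your proposal is correct and follows essentially the same route as the paper: factor the ghost-type map $X(A)\hookrightarrow A^{\N_0}\to (A/[A,A])^{\N_0}$ through $\hat{E}(A)$, identify its image with $W_H(A)$ using the surjection $(a_n)_n\mapsto\sum_n V^n\ab{a_n}$ onto $W_H(A)$ and the injectivity of Hesselholt's ghost map for the (torsion-free) free algebra, and match ghost components. The only cosmetic difference is that you kill $R(A)$ by checking the mod-commutator ghost identities for $r_n,e_n$ directly, whereas the paper routes these relations through $W_H(A)$ via the commuting triangle $\eta=\omega\circ\phi$; the content is the same.
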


\begin{proof}
Let $\eta : X(A) \to \left(\frac{A}{[A,A]}\right)^{\N_0}$ be the map given by the composition $$X(A) \hookrightarrow A^{\N_0} \to \left(\frac{A}{[A,A]}\right)^{\N_0}.$$
 We will show that $R(A) \subseteq {\text Ker}(\eta)$.\\
\noindent We know that the ghost map $\omega: W_H(A) \to  \left(\frac{A}{[A,A]}\right)^{\N_0}$ is injective (see \cite[page 2]{h2}). Hence, the image  $\omega(W_H(A))$ is canonically isomorphic to $W_H(A)$. We consider the following diagram:
\begin{center}
\begin{tikzcd}
  X(A) \arrow[swap]{dr}{\phi} \arrow[r, "\eta"] & \left(\frac{A}{[A,A]}\right)^{\N_0} \\
     & W_H(A) \arrow[u, "\omega"]
\end{tikzcd}
\end{center}
Let $\phi: X(A) \to W_H(A)$ be  a map that sends an element $V^n\ab{a}$ to $V^n\ab{a}$. We also recall that $W_H(A)$ is generated by elements of the form $\{V^n\ab{a_n}\mid a_n \in A, n \in \N_0\}$ by \cite[Lemma 3.3]{hp},  hence, $\phi$ is a surjective map. Also, 
$$\omega(\sum_n V^n\ab{a_n}) = \omega(a_0,a_1,\cdots) = (\overline{a_0},{\overline{a_0}}^p+{\overline{a_1}}, \cdots)$$
It is easy to verify that,
$$\eta(\sum_n V^n\ab{a_n}) = (\overline{a_0},{\overline{a_0}}^p+{\overline{a_1}}, \cdots).$$
Hence, the image of the natural map $X(A) \to (\frac{A}{[A,A]})^{\N_0}$ is canonically isomorphic to $W_H(A)$. Note that the `relations' defining $R(A)$ (see Definition \ref{defn:ehat}) are mapped to zero in $W_H(A)$ and hence are mapped to zero in 
$(\frac{A}{[A,A]})^{\N_0}$. Therefore $\eta$ induces a map 
$$ \frac{X(A)}{R(A)} \cong \hat{E}(A) \xrightarrow{\hat{\eta}} \left(\frac{A}{[A,A]}\right)^{\N_0}$$
and a canonical surjection
$$ \hat{E}(A) \to W_H(A)$$
\end{proof}

\begin{proof}[Proof of Proposition \ref{ehattowh}]  For $R \in \Rings$, consider the presentation $0 \to I \to A:=\Z\ac{R} \to R \to 0$. Lemma \ref{EisX} and the proof of Lemma \ref{imageehat} gives a natural morphism $\eta_A: E(A) \cong X(A) \to W_H(A)$ where $A$ is any free non-commutative algebra. Thus, there exists a morphism $\eta_{\Z\{R\}}: E(\Z\{R\}) \to W_H(\Z\{R\})$ given by, $V^n\ab{a} \mapsto V^n\ab{a}$. In order to show that there exists a morphism $\eta_R: E(R) \to W_H(R)$ we need to show that $\eta_{\Z\{R\}}(\tilde{X_I}) = 0$. By using the Step(2) of the proof of the Theorem \ref{universalweakprewitt}, and replacing $C$ (resp. $F$) by $E$ ( resp. $W_H$) we get that there exists a morphism $\eta_R :  E(R) \to W_H(R)$ between the pre-Witt functors. It is now straightforward to observe that the `relations' defining $\hat{E}(R)$ are mapped to zero in $W_H(R)$. Hence we have a unique natural transformation $\hat{E} \to W_H$ of Witt functors. 
\end{proof}

\begin{proof}[Proof of Theorem \ref{main}]
Follows from  Theorem \ref{e:pre-witt}, Theorem \ref{e=e_c} and  Proposition \ref{ehattowh}. 
\end{proof} 

\section{Evidence for Conjecture \ref{zind}}\label{evidence}

The goal of this section is to discuss the computational evidence for Conjecture \ref{zind}, in addition to the fact that the similar statement holds for commutative polynomial rings (\cite[Lemma 4.2]{ps}).\\

\noindent Let $A = \Z\{X,Y\}$ and $p=3$. For non-zero elements $f_1, f_2, f_3\in A$ which have distinct moduli, we would like to check that $\{\ab{f_i} \}$ is $\Z$-linearly independent subset of $X(A)$. This is clearly true in either of the following cases:
\begin{enumerate}
\item $f_1, f_2, f_3$ are $\Z$-linearly independent.
\item The images of $f_i$ in the commutative polynomial ring $\Z[X,Y]$ have distinct moduli (\cite[Lemma 4.2]{ps}). 
\end{enumerate}

\noindent In order to find computational evidence for Conjecture \ref{zind}, we generated samples $f_1,f_2,f_3$ in $A$ which do not satisfy the above two conditions. For a few thousand of such samples (of degree $\leq 3$) we have verified the conjecture using SAGE MATH 



\begin{thebibliography}{xx}

\bibitem{b} Borger, James; The basic geometry of Witt vectors, I: The affine case. {\it Algebra and Number Theory}, {\bf 5 (2)}, (2011) 231--285. 

\bibitem{bour} Bourbaki, N.;  General topology (Chapters 5 -10) {\it Elem. Math.} (Berlin)
Springer-Verlag, Berlin, 1998.

\bibitem{cd} Cuntz, Joachim; Deninger, Christopher; Witt vector rings and the relative de Rham Witt complex. With an appendix by Umberto Zannier. {\it J. Algebra} {\bf 440} (2015)  545--593. 

\bibitem{dkp} Dotto, Emanuele; Krause, Achim; Nikolaus, Thomas; Patchkoria, Irakli; Witt vectors with coefficients and characteristic polynomials over non-commutative rings. 
{\it Compos. Math.} {\bf 158}, {\bf no. 2}, (2022) 366--408.

\bibitem{h1} Hesselholt, Lars; Witt vectors of non-commutative rings and topological cyclic homology. {\it Acta Math. } {\bf 178}, {\bf no. 1}, (1997) 109--141. 
\bibitem{h2} Hesselholt, Lars; Correction to: ``Witt vectors of non-commutative rings and topological cyclic homology''. {\it Acta Math.} {\bf 195}, (2005)  55--60.

\bibitem{hp} Hogadi, A.; Pisolkar, S.; On the comparison of two constructions of Witt vectors of non-commutative rings.{it J. Algebra} {\bf 506} (2018), 379--396.

\bibitem{J} Joyal, A; $\delta$-anneaux et vecteurs de Witt. {\it C. R. Math. Rep. Acad. Sci. Canada} {\bf 7 (3)} (1985) 177--182.

\bibitem{ps} Pisolkar, S.; Samanta, B.;  A universal group-theoretic characterisation of  p-typical Witt vectors
{\it J. Algebra} {\bf 677} (2025), 1--12.

\bibitem{w} Witt, Ernst; Zyklische Körper und Algebren der Charakteristik p vom Grad pn. Struktur diskret bewerteter perfekter Körper mit vollkommenem Restklassenkörper der Charakteristik p. {\it Journal für die reine und angewandte Mathematik} {\bf 176} (1937) 126--140.

\end{thebibliography}
\end{document}